\newtheorem{theorem}{Theorem}[section]
\newtheorem{lemma}[theorem]{Lemma}
\newtheorem{definition}[theorem]{Definition}
\newtheorem{remark}[theorem]{Remark}
\newcommand{\R}{\mathbb{R}}
\begin{document}

\title{Global existence and exponential growth for a viscoelastic wave equation with dynamic
boundary conditions}
\author{St\'{e}phane Gerbi\thanks{%
Laboratoire de Math\'ematiques, Universit\'e de Savoie et CNRS, UMR-5128, 73376 Le Bourget du
Lac, France, E-mail : \url{stephane.gerbi@univ-savoie.fr}, } ~and Belkacem
Said-Houari\thanks{
Division of Mathematical and Computer Sciences and Engineering,  King Abdullah
University of Science and Technology (KAUST), Thuwal, KSA,
E-mail: \url{saidhouarib@yahoo.fr}}}
\date{}
%\date{\today}
\maketitle

\begin{abstract}
The goal of this work is to study a model of the wave equation with dynamic
boundary conditions and a viscoelastic term. First, applying the
Faedo-Galerkin method combined with the fixed point theorem, we show the
existence and uniqueness of a local in time solution. Second, we show that
under some restrictions on the initial data, the solution continues to exist
globally in time. On the other hand, if the interior source dominates the
boundary damping, then the solution is unbounded and grows as an exponential
function. In addition, in the absence of the strong damping, then the
solution ceases to exist and blows up in finite time.
\end{abstract}
{\bf Keywords:} Damped viscoelastic wave equations, global solutions, exponential growth, blow up in finite time, dynamic boundary conditions.

\section{Introduction}

We consider the following problem
\begin{equation}
\left\{
\begin{array}{ll}
u_{tt}-\Delta u-\alpha \Delta u_{t}+\displaystyle\int_{0}^{t}g(t-s)\Delta
u(s)ds=|u|^{p-2}u, & x\in \Omega ,\ t>0 \, ,\vspace{0.1cm} \\
u(x,t)=0, & x\in \Gamma _{0},\ t>0 \, ,\vspace{0.1cm} \\
u_{tt}(x,t)=-\left[ \displaystyle\frac{\partial u}{\partial \nu }(x,t)-%
\displaystyle\int_{0}^{t}g(t-s)\frac{\partial u}{\partial \nu }(x,s)ds+\frac{%
\alpha \partial u_{t}}{\partial \nu }(x,t)+h\left( u_{t}\right) \right] &
x\in \Gamma _{1},\ t>0\, , \vspace{0.1cm} \\
u(x,0)=u_{0}(x),\qquad u_{t}(x,0)=u_{1}(x) & x\in \Omega \, ,%
\end{array}%
\right.  \label{ondes}
\end{equation}%
%
%
%
%
%%%%%%%%%%%%%%%%%%%%%%%%%%%%%%%%%%%%%%%%%%%%%%%%%%%%%%%%%%%%%%%%%%%%%%%%%%%%%%%%%%%%%%%%%%%%%%%%%%%
where $u=u(x,t)\,,\,t\geq 0\,,\,x\in \Omega \,,\,\Delta $ denotes the
Laplacian operator with res\-pect to the $x$ variable, $\Omega $ is a
regular and bounded domain of $\mathbb{R}^{N}\,,\,(N\geq 1)$, $\partial
\Omega ~=~\Gamma _{0}~\cup ~\Gamma _{1}$, $mes(\Gamma _{0})>0,$ 
$\Gamma_{0}\cap \Gamma _{1}=\varnothing $ and $\partial/\partial \nu $ denotes the unit outer normal derivative, 
$\alpha$ is a positive constant, $p>2, \,h \mbox{ and } g$ are functions whose properties will be discussed in the next section,
$u_{0}\,,\,u_{1}$ are given functions.

Nowadays the wave equation with dynamic boundary conditions are used in a
wide field of applications. See \cite{MuerKu_2011} for some applications.
Problems similar to (\ref{ondes}) arise (for example) in the modeling of
longitudinal vibrations in a homogeneous bar in which there are viscous
effects. The term $\Delta u_t$, indicates that the stress is proportional
not only to the strain, but also to the strain rate, see \cite{CSh_76} fore more details. 

From the mathematical point of view, these problems do not neglect acceleration
terms on the boundary. Such type of boundary conditions are usually called
\textit{dynamic boundary conditions}. They are not only important from the
theoretical point of view but also arise in several physical applications.
For instance in one space dimension and for $g=0$, problem (\ref{ondes}) can
modelize the dynamic evolution of a viscoelastic rod that is fixed at one
end and has a tip mass attached to its free end. The dynamic boundary
conditions represents Newton's law for the attached mass, (see \cite%
{BST64,AKS96, CM98} for more details). In the two dimension space, as showed
in \cite{G06} and in the references therein, these boundary conditions arise
when we consider the transverse motion of a flexible membrane $\Omega $
whose boundary may be affected by the vibrations only in a region. Also some
dynamic boundary conditions as in problem (\ref{ondes}) appear when we
assume that $\Omega $ is an exterior domain of $\mathbb{R}^{3} $ in which
homogeneous fluid is at rest except for sound waves. Each point of the
boundary is subjected to small normal displacements into the obstacle: this type of dynamic boundary conditions are
known as acoustic boundary conditions,  see \cite{B76} for more details.

Littman and Markus \cite{LM88} considered a system which describe an elastic
beam, linked at its free end to a rigid body. The whole system is governed
by the Euler-Bernoulli Partial Differential Equations with dynamic boundary conditions. They used the
classical semigroup methods to establish existence and uniqueness results
while the asymptotic stabilization of the structure is achieved by the use
of feedback boundary damping.

In \cite{GV94} the author introduced the model%
\begin{equation}
u_{tt}-u_{xx}-u_{txx}=0,\qquad x\in (0,L),\,t>0,  \label{Vand_1}
\end{equation}%
which describes the damped longitudinal vibrations of a homogeneous flexible
horizontal rod of length $L$ when the end $x=0$ is rigidly fixed while the
other end $x=L$ is free to move with an attached load. Thus she considered
Dirichlet boundary condition at $x=0$ and dynamic boundary conditions at $%
x=L\,$, namely
\begin{equation}
u_{tt}(L,t)=-\left[ u_{x}+u_{tx}\right] (L,t),\qquad t>0  \ . \label{Vand_2}
\end{equation}%
By rewriting the whole system within the framework of the abstract theories
of the so-called $B$-evolution theory, the existence of a unique solution in
the strong sense has been shown. An exponential decay result was also proved
in \cite{GV96} for a problem related to (\ref{Vand_1})-(\ref{Vand_2}), which
describe the weakly damped vibrations of an extensible beam. See \cite{GV96}
for more details.

Subsequently, Zang and Hu \cite{ZH07}, considered the problem
\begin{equation*}
u_{tt}-p\left( u_{x}\right) _{xt}-q\left( u_{x}\right) _{x}=0,\qquad x\in
\left(0,1\right) ,\,t>0
\end{equation*}
with
\begin{equation*}
u\left( 0,t\right) =0,\qquad p\left( u_{x}\right) _{t}+q\left( u_{x}\right)
\left( 1,t\right) +ku_{tt}\left( 1,t\right) =0,\, t\geq 0.
\end{equation*}
By using the Nakao inequality, and under appropriate conditions on $p$ and $%
q $, they established both exponential and polynomial decay rates for the
energy depending on the form of the terms $p$ and $q$.

Recently, the present authors have
considered, in \cite{GS08} and \cite{GS082}, problem (\ref{ondes}) with $g=0$ and a nonlinear boundary 
damping of the form $h\left(u_{t}\right)=\left\vert u_{t}\right\vert^{m-2}u_{t}$. A local
existence result was obtained by combining the Faedo-Galerkin method with
the contraction mapping theorem. Concerning the asymptotic behavior, the
authors showed that the solution of such problem is unbounded and grows up
exponentially when time goes to infinity provided that the initial data are large
enough and the damping term is nonlinear. The blow up result was shown when
the damping is linear (i.e. $m=2$). Also, we proved in \cite{GS082} that
under some restrictions on the exponents $m$ and $p$, we can always find
initial data for which the solution is global in time and decays
exponentially to zero. These results had been recently generalized for a wide range of nonlinearities
in the equation and in the boundary term: the authors proved the local existence and uniqueness by a sophisticated
application of the non linear semigroup theory, see \cite{GRS2012}.

In the absence of the strong damping $\alpha \Delta u_t$ and for Dirichlet
boundary conditions on the whole boundary $\partial\Omega$, the question of
blow up in finite time of problem (\ref{ondes}) has been investigated by many authors. 
Messaoudi \cite{Mess03}  showed that if the initial energy is
negative and if the relaxation function $g$ satisfies the following assumption
\begin{equation}  \label{Messaoudi_condition}
\int_{0}^{\infty }g(s)ds<\frac{(p/2)-1}{(p/2)-1+(1/2p)} \ ,
\end{equation}%
then the solutions blow up in finite time.
In fact this last condition has been assumed by other researchers. See for instance \cite{Mess_Kafi_2007,Mess_Kafi_2008,MS2010,Mes01,SaZh2010,YWL2009}.

The main goal of this paper is to prove the local existence and to study the asymptotic
behavior of the solution of problem (\ref{ondes}).  

One of the main questions is to show a blow-up result of the solution. This question is a difficult open problem, since in the presence of the strong damping term, i.e. when $\alpha\neq 0$, the problem has a parabolic structure, which means that the solution gains more regularity. However, in this paper, we give a partial answer to this question and show that for $\alpha\neq 0$ and for large initial data, the solution is unbounded and grows exponentially as $t$ goes to infinity.
While for the case $\alpha=0$, the solution has been shown to blow up in finite time.

The main contribution of this paper in this blow up result is the following:
the exponential growth and blow-up results  hold without making the assumption (\ref{Messaoudi_condition}).
In fact the only requirement is that the exponent $p$ has to be large enough which is a condition 
much weaker than condition (\ref{Messaoudi_condition}). Moreover, unlike in the works of Messaoudi and coworkers,
we do not assume any polynomial structure on the damping term $h(u_t)$,
to obtain an exponential growth of the solution or a blow up in finite time.

This paper is organized as follows:
firstly, applying the Faedo-Galerkin method combined with the fixed point theorem, we show,  in Section \ref{local_existence_section}, the
existence and uniqueness of a local in time solution. 
Secondly, under the smallness assumption on the initial
data, we show, in Section \ref{Global_existence_section}, that the solution continues to exist globally in time. 
On the other hand, in Section \ref{Exponential_growth_section}, we prove that under
some restrictions on the initial data and if the interior source dominates
the boundary damping then the $L^p$-norm of the solution grows as an
exponential function. Lastly, in Section \ref{blow_up_section}, we investigate the case when $\alpha=0$ and we prove that the
solution ceases to exist and blows up in finite time.

\section{Preliminary and local existence}

\label{local_existence_section}

In this section, we introduce some notations used throughout this paper.
We also prove a local existence result of the solution of problem (\ref%
{ondes}).

We denote
\begin{equation*}
H_{\Gamma_{0}}^{1}(\Omega) =\left\{u \in H^1(\Omega) /\ u_{\Gamma_{0}} =
0\right\} .
\end{equation*}
By $( .,.) $ we denote the scalar product in $L^{2}( \Omega)$ i.e. $(u,v)(t)
= \int_{\Omega} u(x,t) v(x,t) dx$. Also we mean by $\Vert
.\Vert_{q}$ the $L^{q}(\Omega) $ norm for $1 \leq q \leq \infty$, and by $%
\Vert .\Vert_{q,\Gamma_{1}}$ the $L^{q}(\Gamma_{1}) $ norm.

Let $T>0$ be a real number and $X$ a Banach space endowed with norm $\Vert.\Vert _{X}$. 
$L^{p}(0,T;X),\ 1~\leq p~<\infty $ denotes the space of
functions $f$ which are $L^{p}$ over $\left( 0,T\right) $ with values in $X$%
, which are measurable and $\Vert f\Vert _{X}\in L^{p}\left( 0,T\right) $.
This space is a Banach space endowed with the norm
\begin{equation*}
\Vert f\Vert _{L^{p}\left( 0,T;X\right) }=\left( \int_{0}^{T}\Vert f\Vert
_{X}^{p}dt\right) ^{1/p}\quad .
\end{equation*}%
$L^{\infty }\left( 0,T;X\right) $ denotes the space of functions $f:\left]
0,T\right[ \rightarrow X$ which are measurable and $\Vert f\Vert _{X}\in
L^{\infty }\left( 0,T\right) $. This space is a Banach space endowed with
the norm:
\begin{equation*}
\Vert f\Vert _{L^{\infty }(0,T;X)}=\mbox{ess}\sup_{0<t<T}\Vert f\Vert
_{X}\quad .
\end{equation*}%
We recall that if $X$ and $Y$ are two Banach spaces such that $%
X\hookrightarrow Y$ (continuous embedding), then
\begin{equation*}
L^{p}\left( 0,T;X\right) \hookrightarrow L^{p}\left( 0,T;Y\right) ,\ 1\leq
p\leq \infty .
\end{equation*}%
We will also use the embedding (see \cite[Therorem 5.8]{A75}):
\begin{equation*}
H_{\Gamma _{0}}^{1}(\Omega )\hookrightarrow L^{p}(\Omega),\;2\leq p\leq
\bar{p}\quad \mbox{where }\quad \bar{p}=\left\{
\begin{array}{ll}
\dfrac{2N}{N-2} & \mbox{ if }N\geq 3, \\
+\infty & \mbox{ if }N=1,2 \ ,%
\end{array}%
\right.
\end{equation*}
and also
\begin{equation*}
H_{\Gamma _{0}}^{1}(\Omega )\hookrightarrow L^{q}(\Gamma _{1}),\;2\leq q\leq
\bar{q}\quad \mbox{where }\quad \bar{q}=\left\{
\begin{array}{ll}
\dfrac{2(N-1)}{N-2} & \mbox{ if }N\geq 3, \\
+\infty & \mbox{ if }N=1,2.%
\end{array}%
\right.
\end{equation*}%
For $2 \leq m \leq \bar{q}$, let us denote $V=H_{\Gamma _{0}}^{1}(\Omega )\cap L^{m}(\Gamma _{1})$.

We assume that the relaxation functions $g$ is of class $C^{1}$ on $\R$ and
satisfies:
\begin{equation}
\forall \, s \, \in \R \,,\, g\left( s\right) \geq 0,\mbox{ and } \left(1-\displaystyle\int_{0}^{\infty }g\left(s\right) ds \right)=l>0   \ .\label{hypothesis_g}
\end{equation}%
Moreover, we suppose that:
\begin{equation}
\forall \, s\geq 0 \,,\, g^{\prime}(s) \leq 0.
\label{hypothesis_g_2}
\end{equation}
The hypotheses on the function $h$ are the following:
\begin{description}
\item[(H1)] $h$ is continuous and strongly monotone, i.e.  for $2 \leq m \leq \bar{q}$, there exists a
constant $m_{0}>0$ such that
\begin{equation}
\left( h(s)-h(v)\right) (s-v)\geq m_{0}|s-v|^{m} \, , 
\label{Assumption_h_1}
\end{equation}

\item[(H2)] there exist two positive constants $c_{m}$ and $C_{m}$ such that
\begin{equation}
c_{m}|s|^{m}\leq h(s)s\leq C_{m}|s|^{m},\qquad \forall s\in \mathbb{R} \ .
\label{Assumption_h}
\end{equation}
\end{description}

For a function $u \in C\Bigl(\lbrack \ 0,T],H_{\Gamma _{0}}^{1}(\Omega )\Bigl)$, let us introduce the following notation:
\begin{equation*}
\left( g\diamond u\right) \left( t\right) =\int_{0}^{t}g\left( t-s\right)
\left\Vert \nabla u\left( s\right) -\nabla u\left( t\right) \right\Vert
_{2}^{2}ds.
\end{equation*}%
Thus, when $u \in C\Bigl(\lbrack \ 0,T],H_{\Gamma _{0}}^{1}(\Omega )\Bigl)\cap C^{1}\Bigl(\lbrack \ 0,T],L^{2}(\Omega )\Bigl)$ such that
$u_{t} \in L^{2}\Bigl(0,T;H_{\Gamma _{0}}^{1}(\Omega )\Bigl)$, we have:
\begin{eqnarray}
\frac{d}{dt}\left( g\diamond u\right) \left( t\right)
&=&\int_{0}^{t}g^{\prime }\left( t-s\right) \left\Vert \nabla u\left(
s\right) -\nabla u\left( t\right) \right\Vert _{2}^{2}ds  \notag \\
&&+\frac{d}{dt}\left( \left\Vert \nabla u\left( t\right) \right\Vert
_{2}^{2}\right) \int_{0}^{t}g\left( s\right) ds-2\int_{\Omega
}\int_{0}^{t}g\left( t-s\right) \nabla u\left( s\right) \nabla u_{t}\left(
t\right) dsdx  \notag \\
&=&\left( g^{\prime }\diamond u\right) \left( t\right) -2\int_{\Omega
}\int_{0}^{t}g\left( t-s\right) \nabla u\left( s\right) \nabla u_{t}\left(
t\right) dsdx  \label{Integral_relation} \\
&&+\frac{d}{dt}\left\{ \left\Vert \nabla u\left( t\right) \right\Vert
_{2}^{2}\int_{0}^{t}g\left( s\right) ds\right\} -g\left( t\right) \left\Vert
\nabla u\left( t\right) \right\Vert _{2}^{2}.  \notag
\end{eqnarray}%
This last identity implies:
\begin{eqnarray}
\int_{\Omega }\int_{0}^{t}g\left( t-s\right) \nabla u\left( s\right) \nabla
u_{t}\left( t\right) dsdx &=&\frac{1}{2}\left( g^{\prime }\diamond u\right)
\left( t\right) +\frac{1}{2}\frac{d}{dt}\left\{ \left\Vert \nabla u\left(
t\right) \right\Vert _{2}^{2}\int_{0}^{t}g\left( s\right) ds\right\}  \notag
\\
&&-\frac{1}{2}g\left( t\right) \left\Vert \nabla u\left( t\right)
\right\Vert _{2}^{2}-\frac{1}{2}\frac{d}{dt}\left( g\diamond u\right) \left(
t\right) .  \label{Nonlinear_term_viscoelastic}
\end{eqnarray}%
For $u \in C\Bigl(\lbrack \ 0,T],H_{\Gamma _{0}}^{1}(\Omega )\Bigl)\cap C^{1}\Bigl(\lbrack \ 0,T],L^{2}(\Omega )\Bigl)$ such that
$u_{t} \in L^{2}\Bigl(0,T;H_{\Gamma _{0}}^{1}(\Omega )\Bigl)$, let us define the modified energy functional $E$ by:
\begin{eqnarray}
E\left( t,u,u_{t}\right) =E\left( t\right) &=&\dfrac{1}{2}\Vert u_{t}\left(t\right) \Vert _{2}^{2}+\dfrac{1}{2}\Vert u_{t}\left( t\right) \Vert
_{2,\Gamma _{1}}^{2}+\dfrac{1}{2}\left( 1-\displaystyle\int_{0}^{t}g\left(
s\right) ds\right) \left\Vert \nabla u\left( t\right) \right\Vert _{2}^{2}\vspace{0.2cm} \notag\\
& ~&+\dfrac{1}{2}\left( g\diamond u\right) \left( t\right) -\dfrac{1}{p}\left\Vert u\left( t\right) \right\Vert _{p}^{p}.\label{Energy_visco_elastic}
\end{eqnarray}

The following local existence result of the solution of problem (\ref{ondes}) is closely related
to the one we have proved for a slightly different problem in \cite[Theorem 2.1]{GS08},
where no memory term was present. Let us sate it:
\begin{theorem}
\label{existence} Assume that (\ref{hypothesis_g}), (\ref{hypothesis_g_2})
and (\ref{Assumption_h_1}) hold.
Let $2\leq p\leq \bar{q}$ and $\max\left( 2,\frac{\bar{q}}{\bar{q}+1-p} \right) \leq m \leq \bar{q}$. 
 Then given $%
u_{0}\in H_{\Gamma _{0}}^{1}(\Omega )$ and $u_{1}\in L^{2}(\Omega )$, there
exists $T>0$ and a unique solution $u$ of the problem (\ref{ondes}) on $%
(0,T) $ such that
\begin{eqnarray*}
u &\in &C\Bigl(\lbrack \ 0,T],H_{\Gamma _{0}}^{1}(\Omega )\Bigl)\cap C^{1}%
\Bigl(\lbrack \ 0,T],L^{2}(\Omega )\Bigl), \\
u_{t} &\in &L^{2}\Bigl(0,T;H_{\Gamma _{0}}^{1}(\Omega )\Bigl)\cap
L^{m}\left( \left( 0,T\right) \times \Gamma _{1}\right) .
\end{eqnarray*}
\end{theorem}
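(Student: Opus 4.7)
The plan is to reproduce the strategy of \cite[Theorem 2.1]{GS08}: set up a fixed-point argument on the interior nonlinearity, with each evaluation of the solution map obtained by the Faedo-Galerkin method. For $T>0$ and $R>0$ to be chosen later, introduce
\[
X_T = \Bigl\{ v : v \in L^\infty(0,T;H^1_{\Gamma_0}(\Omega)),\ v_t \in L^\infty(0,T;L^2(\Omega)) \cap L^2(0,T;H^1_{\Gamma_0}(\Omega)) \cap L^m((0,T)\times\Gamma_1) \Bigr\},
\]
with its natural energy norm, and let $B_R(T)$ be the closed ball of radius $R$ in $X_T$ consisting of $v$ with $v(0)=u_0$, $v_t(0)=u_1$. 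For $v\in B_R(T)$, define $\Phi(v)=u$ to be the solution of \eqref{ondes} in which $|u|^{p-2}u$ is replaced by the known forcing $|v|^{p-2}v$; this problem is linear in $u$ except for the monotone boundary damping. Any fixed point of $\Phi$ solves \eqref{ondes}.

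The construction of $\Phi(v)$ is by Faedo-Galerkin in $V=H^1_{\Gamma_0}(\Omega)\cap L^m(\Gamma_1)$. Given a basis $\{w_j\}$ of $V$, the approximate solution $u^n(t)=\sum_{j=1}^n c_j^n(t)w_j$ satisfies a weak formulation including the boundary integral $\int_{\Gamma_1} u_{tt}^n w\,d\sigma$, thereby encoding the dynamic boundary condition; the resulting nonlinear ODE system is solvable thanks to the continuity and monotonicity of $h$. Testing with $u_t^n$ and using identity \eqref{Nonlinear_term_viscoelastic} yields, with $E_n$ the analogue of $E$ for $u^n$,
\[
\frac{d}{dt}E_n(t) + \alpha\|\nabla u_t^n\|_2^2 + \int_{\Gamma_1} h(u_t^n)\,u_t^n\,d\sigma = \tfrac{1}{2}(g'\diamond u^n)(t) - \tfrac{1}{2}g(t)\|\nabla u^n\|_2^2 + \int_\Omega |v|^{p-2}v\,u_t^n\,dx.
\]
By \eqref{hypothesis_g_2} the term $(g'\diamond u^n)$ is nonpositive; by (H2) the boundary term on the left dominates $c_m\|u_t^n\|_{m,\Gamma_1}^m$; and the source is controlled by H\"older and the embedding $H^1_{\Gamma_0}\hookrightarrow L^p(\Omega)$, which is available since $p\leq \bar q\leq \bar p$. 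Gronwall then gives uniform bounds on $u^n$ in $L^\infty(0,T;H^1_{\Gamma_0})$ and on $u_t^n$ in $L^\infty(0,T;L^2)\cap L^2(0,T;H^1_{\Gamma_0})\cap L^m((0,T)\times\Gamma_1)$, hence on $h(u_t^n)$ in $L^{m'}((0,T)\times\Gamma_1)$ by (H2). Weak-$\ast$ and Aubin-Lions compactness, together with the Minty monotonicity trick based on (H1) to identify the weak limit of $h(u_t^n)$ with $h(u_t)$, allow passage to the limit in every term; the convolution passes to the limit by dominated convergence using $g\in C^1(\R)$. This places $\Phi(v)$ in $X_T$.

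To close the fixed point, the same energy bound shows $\Phi(B_R(T))\subset B_R(T)$ for $R$ sufficiently large compared to the data and $T$ sufficiently small. For the contraction, $w=\Phi(v_1)-\Phi(v_2)$ satisfies the linear memory equation with zero initial data, source $|v_1|^{p-2}v_1-|v_2|^{p-2}v_2$, and boundary term $h((\Phi(v_1))_t)-h((\Phi(v_2))_t)$ whose product against $w_t$ is nonnegative by (H1). The standard inequality
\[
\bigl||a|^{p-2}a-|b|^{p-2}b\bigr| \leq c_p\bigl(|a|^{p-2}+|b|^{p-2}\bigr)|a-b|,
\]
combined with H\"older and the embeddings $H^1_{\Gamma_0}\hookrightarrow L^p(\Omega)$ and $H^1_{\Gamma_0}\hookrightarrow L^{\bar q}(\Gamma_1)$, leads to an estimate of the form
\[
\|w\|_{X_T}^2 \leq C(R)\,T^{\sigma}\,\|v_1-v_2\|_{X_T}^2
\]
with $\sigma>0$; for $T$ small this gives the contraction and hence the unique fixed point, and the same estimate applied to two solutions yields uniqueness within the regularity class. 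The main obstacle is precisely this last estimate: closing it in the presence of both a monotone-but-otherwise-unstructured boundary damping and a viscoelastic memory term requires a careful juggling of H\"older exponents, and the technical lower bound $m\geq \bar q/(\bar q+1-p)$ enters as exactly the threshold at which the $H^1$-norm of $w$ can be extracted from the source estimate with a strictly positive power of $T$ in front.
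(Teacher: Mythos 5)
Your proposal follows essentially the same route as the paper: the solution map $\Phi$ sending a given profile $v$ to the solution of the auxiliary problem with source $|v|^{p-2}v$ is constructed by Faedo--Galerkin with energy estimates, monotonicity of $h$, and compactness (the paper organizes this in two steps, first solving the auxiliary problem for regularized data $u_0\in H^2(\Omega)\cap V$, $u_1\in H^2(\Omega)$ and then recovering the rough-data case by showing the corresponding solutions form a Cauchy sequence in $Y_T$, rather than running Galerkin directly on rough data as you do), and the fixed point is then obtained by the contraction mapping theorem on a ball of $Y_T$ exactly as you describe. The one discrepancy worth noting is your attribution of the threshold $m\ge \bar{q}/(\bar{q}+1-p)$ to the contraction estimate for the interior source: according to the paper's closing remark of Section 2, this condition is needed to pass to the limit in the nonlinear boundary term $h(v_t^k)$, as in \cite[Equation (2.28)]{GS08}.
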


Let us mention that Theorem \ref{existence} also holds for $\alpha=0$. The proof of Theorem \ref{existence} can be done along the same line as in
\cite[Theorem 2.1]{GS08}. The main idea of the proof is based on the
combination between the Fadeo-Galerkin approximations and the contraction
mapping theorem. However, for the convenience of the reader we give only the
outline of the proof here.

For $u\in C\bigl(\lbrack 0,T],H_{\Gamma _{0}}^{1}(\Omega )\bigl)\,\cap
\,C^{1}\bigl(\lbrack 0,T],L^{2}(\Omega )\bigl)$ given, let us consider the
following problem:
\begin{equation}
\left\{
\begin{array}{ll}
v_{tt}-\Delta v-\alpha \Delta v_{t}+\displaystyle\int_{0}^{t}g(t-s)\Delta
v(s)ds=|u|^{p-2}u, & x\in \Omega ,\ t>0 \,,\vspace{0.2cm} \\
v(x,t)=0, & x\in \Gamma _{0},\ t>0\,, \vspace{0.2cm} \\
v_{tt}(x,t)=-\left[ \displaystyle\frac{\partial v}{\partial \nu }(x,t)-%
\displaystyle\int_{0}^{t}g(t-s)\frac{\partial v}{\partial \nu }(x,s)ds+\frac{%
\alpha \partial v_{t}}{\partial \nu }(x,t)+h\left( v_{t}\right) \right] &
x\in \Gamma _{1},\ t>0 \,, \vspace{0.2cm} \\
v(x,0)=u_{0}(x),\;v_{t}(x,0)=u_{1}(x) & x\in \Omega .%
\end{array}%
\right.  \label{ondes_u}
\end{equation}

\begin{definition}
\label{generalised} A function $v(x,t)$ such that
\begin{eqnarray*}
v &\in &L^{\infty }\left( 0,T;H_{\Gamma _{0}}^{1}(\Omega )\right) \ , \\
v_{t} &\in &L^{2}\left( 0,T;H_{\Gamma _{0}}^{1}(\Omega )\right) \cap
L^{m}\left( (0,T)\times \Gamma _{1}\right) \ , \\
v_{t} &\in &L^{\infty }\left( 0,T;H_{\Gamma _{0}}^{1}(\Omega )\right) \cap
L^{\infty }\left( 0,T;L^{2}(\Gamma _{1})\right) \ , \\
v_{tt} &\in &L^{\infty }\left( 0,T;L^{2}(\Omega )\right) \cap L^{\infty
}\left( 0,T;L^{2}(\Gamma _{1})\right) \ , \\
v(x,0) &=&u_{0}(x)\,, \\
v_{t}(x,0) &=&u_{1}(x)\,,
\end{eqnarray*}%
is a generalized solution to the problem (\ref{ondes_u}) if for any function
$\omega \in H_{\Gamma _{0}}^{1}(\Omega )\cap L^{m}(\Gamma _{1})$ and $%
\varphi \in C^{1}(0,T)$ with $\varphi (T)=0$, we have the following
identity:
\begin{equation*}
\begin{array}{lll}
\displaystyle\int_{0}^{T}(|u|^{p-2}u,w)(t)\,\varphi (t)\,dt & =\displaystyle%
\int_{0}^{T}\Bigg[(v_{tt},w)(t)+(\nabla v,\nabla
w)(t)-\int_{0}^{t}g(t-s)(\nabla v\left( s\right) ,\nabla w\left( t\right) )ds %
 &  \\
& +\alpha (\nabla v_{t},\nabla w)(t)\varphi (t)dt\Bigg]+\displaystyle%
\int_{0}^{T}\varphi (t)\left( \int_{\Gamma _{1}}v_{tt}(t)w\,d\Gamma +h\left(
v_{t}\right) \,d\Gamma \right) dt. &
\end{array}%
\end{equation*}
\end{definition}

\begin{lemma}
\label{existence_f} Let $2\leq p\leq \bar{q}$ and $2 \leq m \leq \bar{q}$. Let $u_{0}\in H^{2}(\Omega
)\cap V,\,u_{1}\in H^{2}(\Omega )$, then for any $T>0,$ there exists a
unique generalized solution (in the sense of Definition \ref{generalised}), $%
v(t,x)$ of problem (\ref{ondes_u}).
\end{lemma}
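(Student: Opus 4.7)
The plan is to construct a solution via the Faedo-Galerkin method on a basis adapted to $V=H_{\Gamma_0}^{1}(\Omega)\cap L^{m}(\Gamma_{1})$. I would select a countable linearly independent family $\{w_{j}\}_{j\geq 1}\subset V\cap H^{2}(\Omega)$ whose finite linear combinations are dense in $V$ (for instance eigenfunctions of a suitable elliptic problem) and look for $v_{k}(t)=\sum_{j=1}^{k}a_{jk}(t)w_{j}$ satisfying the system obtained by testing (\ref{ondes_u}) against each $w_{j}$ and using the dynamic boundary condition to absorb the $v_{tt}$ trace on $\Gamma_{1}$. Because $u\in C([0,T],H_{\Gamma_0}^{1}(\Omega))$ is fixed, the source $|u|^{p-2}u$ is continuous in $t$ with values in a suitable Lebesgue space, so the resulting second order ODE system in the $a_{jk}$'s has Carathéodory data (continuity of $h$ is enough here) and admits a local solution, which by the a priori estimates below will be extended to $[0,T]$.

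Next I would derive the standard energy identity for $v_{k}$ by multiplying the Galerkin equation by $a_{jk}'$ and summing. Using the identity (\ref{Nonlinear_term_viscoelastic}) to dispose of the convolution term and exploiting (\ref{hypothesis_g})-(\ref{hypothesis_g_2}), the energy $E(t,v_k,v_k')$ defined in (\ref{Energy_visco_elastic}) leads to
\[
E_{k}(t)+\alpha\int_{0}^{t}\|\nabla v_{k}'\|_{2}^{2}\,d\tau+\int_{0}^{t}\!\!\int_{\Gamma_{1}}h(v_{k}')v_{k}'\,d\Gamma\,d\tau\leq E_{k}(0)+\int_{0}^{t}(|u|^{p-2}u,v_{k}')\,d\tau.
\]
Hypothesis (H2) turns the boundary term into an $L^{m}$-control of $v_{k}'$ on $(0,T)\times\Gamma_{1}$, while Young and Gronwall give uniform bounds of $v_{k}$ in $L^{\infty}(0,T;H_{\Gamma_{0}}^{1}(\Omega))$, of $v_{k}'$ in $L^{\infty}(0,T;L^{2}(\Omega))\cap L^{\infty}(0,T;L^{2}(\Gamma_{1}))\cap L^{2}(0,T;H_{\Gamma_{0}}^{1}(\Omega))\cap L^{m}((0,T)\times\Gamma_{1})$, and of $h(v_{k}')$ in $L^{m'}((0,T)\times\Gamma_{1})$.

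To reach the stronger regularity required by Definition \ref{generalised}, in particular $v_{t}\in L^{\infty}(0,T;H_{\Gamma_{0}}^{1}(\Omega))$ and $v_{tt}\in L^{\infty}(0,T;L^{2}(\Omega))\cap L^{\infty}(0,T;L^{2}(\Gamma_{1}))$, I would differentiate the Galerkin equation in time (legal at the finite dimensional level), test against $a_{jk}''$ and integrate by parts. The extra assumption $u_{0},u_{1}\in H^{2}(\Omega)$ is used precisely to control $v_{k}''(0)$ in $L^{2}(\Omega)\cap L^{2}(\Gamma_{1})$ by reading off the equation at $t=0$, where the memory term vanishes because $\int_{0}^{0}g(-s)\Delta u_{0}\,ds=0$. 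The differentiated convolution $\partial_{t}\int_{0}^{t}g(t-s)\Delta v_{k}(s)\,ds=g(0)\Delta v_{k}(t)+\int_{0}^{t}g'(t-s)\Delta v_{k}(s)\,ds$ is absorbed by (\ref{hypothesis_g_2}) and the first level estimate, and the differentiated boundary term is handled by the strong monotonicity (H1) of $h$, which is what keeps $v_{k}'$ bounded in $H^{1}_{\Gamma_0}(\Omega)$ uniformly.

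Finally I would pass to the limit $k\to\infty$: weak-$*$ convergences in the $L^{\infty}$ spaces above, weak convergence in $L^{2}(0,T;H_{\Gamma_{0}}^{1}(\Omega))$ and the Aubin-Lions lemma to gain strong $L^{2}$-convergence of $v_{k}$ and $v_{k}'$, hence almost everywhere convergence of their boundary traces. The only nontrivial identification is that of the weak limit of $h(v_{k}')$ on $\Gamma_{1}$, which is where I expect the main difficulty; it is resolved by the Minty-Browder monotonicity argument based on (\ref{Assumption_h_1}), combined with the strong convergence of $v_{k}'$ on $(0,T)\times\Gamma_{1}$ granted by the $L^{m}$ bound and compactness. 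Uniqueness then follows by taking the difference of two generalized solutions, using the resulting equation tested against the derivative of the difference, exploiting once more the identity (\ref{Nonlinear_term_viscoelastic}) to treat the memory term, the monotonicity of $h$ on the boundary, and Gronwall's inequality.
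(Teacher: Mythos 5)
Your proposal follows exactly the route the paper takes: the authors state that Lemma \ref{existence_f} is proved by the Faedo-Galerkin approximation combined with the compactness method, along the lines of Lemma 2.2 of \cite{GS08}, and omit the details; your outline (Galerkin basis in $V\cap H^{2}(\Omega)$, first and second energy estimates using $u_{0},u_{1}\in H^{2}(\Omega)$ to control $v_{tt}(0)$, Aubin--Lions compactness, Minty--Browder monotonicity to identify $h(v_{t})$ on $\Gamma_{1}$, and uniqueness by energy estimates on the difference) is precisely that argument. The only point to watch is that $h$ is merely continuous and monotone, so the time-differentiation step at the Galerkin level should formally be carried out with difference quotients rather than a literal derivative of $h(v_{k}')$, but this is a standard technicality that does not change the scheme.
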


The proof of Lemma \ref{existence_f} is essentially based on the
Fadeo-Galerkin approximations combined with the compactness method and can
be done along the same line as in \cite[Lemma 2.2]{GS08}, we omit the details.

In the following lemma we state a local existence result of problem (\ref%
{ondes_u}).

\begin{lemma}
\label{existence_u} Let $2\leq p\leq \bar{q}$ and $\max\left( 2, \frac{\bar{q}}{\bar{q}+1-p} \right) \leq m \leq \bar{q}$. Then given $u_{0}~\in
~H_{\Gamma _{0}}^{1}(\Omega )\,,\,u_{1}\in L^{2}(\Omega )$ there exists $T>0$
and a unique solution $v$ of the problem (\ref{ondes_u}) on $(0,T)$ such
that
\begin{eqnarray*}
v &\in &C\Bigl(\lbrack 0,T],H_{\Gamma _{0}}^{1}(\Omega )\Bigl)\,\cap \,C^{1}%
\Bigl(\lbrack 0,T],L^{2}(\Omega )\Bigl), \\
v_{t} &\in &L^{2}\Bigl(0,T;H_{\Gamma _{0}}^{1}(\Omega )\Bigl)\,\cap
L^{m}\left( \left( 0,T\right) \times \Gamma _{1}\right)
\end{eqnarray*}%
and satisfies the energy inequality:
\begin{eqnarray*}
&&\frac{1}{2}\left[ \Vert u_{t}\left( t\right) \Vert _{2}^{2}+\Vert
u_{t}\left( t\right) \Vert _{2,\Gamma _{1}}^{2}+\left( 1-\displaystyle%
\int_{0}^{t}g\left( s\right) ds\right) \left\Vert \nabla u\left( t\right)
\right\Vert _{2}^{2}\vspace{0.2cm}+\left( g\diamond u\right) \left( t\right) %
\right] _{s}^{t} \\
&&+\alpha \displaystyle\int_{s}^{t}\Vert \nabla v_{t}(\tau )\Vert
_{2}^{2}d\tau +\displaystyle\int_{s}^{t}\int_{\Gamma _{1}}h(v_{t}(\sigma,\tau
))d\sigma d\tau \\
&\leq &\displaystyle\int_{s}^{t}\displaystyle\int_{\Omega }|u(\tau
)|^{p-2}u(\tau )v_{t}(\tau )d\tau dx
\end{eqnarray*}%
for $0\leq s\leq t\leq T$.
\end{lemma}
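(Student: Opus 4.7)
The plan is to remove the extra regularity assumed in Lemma \ref{existence_f} via a density argument. Given $(u_0,u_1) \in H^1_{\Gamma_0}(\Omega) \times L^2(\Omega)$, I pick approximating sequences $u_0^k \in H^2(\Omega) \cap V$ and $u_1^k \in H^2(\Omega)$ with $u_0^k \to u_0$ in $H^1_{\Gamma_0}(\Omega)$ and $u_1^k \to u_1$ in $L^2(\Omega)$. Applying Lemma \ref{existence_f} with these smoother data yields, for each $k$, a generalized solution $v^k$ of (\ref{ondes_u}) on $[0,T]$ for any $T>0$. It remains to extract a limit $v$ of $(v^k)$ and to verify that it has the stated regularity and satisfies the energy inequality.

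To that end I derive a priori estimates uniform in $k$. Testing (\ref{ondes_u}) against $v_t^k$, integrating over $\Omega\times(0,\tau)$, rewriting the memory contribution by means of identity (\ref{Nonlinear_term_viscoelastic}), and invoking (\ref{hypothesis_g_2}) together with the coercivity (\ref{Assumption_h}), I obtain
\[
E(\tau,v^k,v^k_t) + \alpha \int_0^\tau \Vert\nabla v_t^k\Vert_2^2\, dt + c_m \int_0^\tau \Vert v_t^k\Vert_{m,\Gamma_1}^m\, dt \leq E(0,v^k,v^k_t) + \int_0^\tau \!\! \int_\Omega |u|^{p-2}u \, v_t^k\, dx\, dt.
\]
Since $u$ is the fixed datum in $C([0,T];H^1_{\Gamma_0}(\Omega))$, the source $|u|^{p-2}u$ lies in $L^\infty(0,T;L^{p/(p-1)}(\Omega))$ by Sobolev embedding, and H\"older followed by Young's inequality lets me absorb the $v_t^k$ contribution. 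Gronwall then bounds $v^k$ uniformly in $L^\infty(0,T;H^1_{\Gamma_0}(\Omega))$, and $v_t^k$ uniformly in $L^\infty(0,T;L^2(\Omega)) \cap L^\infty(0,T;L^2(\Gamma_1)) \cap L^2(0,T;H^1_{\Gamma_0}(\Omega)) \cap L^m((0,T)\times\Gamma_1)$; the memory term $g\diamond v^k$ is controlled as well.

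Using these bounds I extract subsequences with $v^k \stackrel{*}{\rightharpoonup} v$ in the corresponding spaces, and by the Aubin--Lions lemma $v^k \to v$ strongly in $C([0,T];L^2(\Omega))$ and a.e. in $\Omega\times(0,T)$. The linear, viscoelastic and source terms pass to the limit directly. The main obstacle is the nonlinear boundary damping: by (\ref{Assumption_h}), $h(v_t^k)$ is bounded in $L^{m/(m-1)}((0,T)\times \Gamma_1)$, so up to subsequence $h(v_t^k) \rightharpoonup \chi$. To identify $\chi = h(v_t)$, I invoke Minty's monotonicity trick based on (\ref{Assumption_h_1}): for arbitrary $w \in L^m((0,T)\times\Gamma_1)$, the inequality
\[
\int_0^T\!\!\int_{\Gamma_1}\bigl(h(v_t^k)-h(w)\bigr)(v_t^k - w)\,d\sigma\, dt \geq 0
\]
passes to the limit thanks to the energy identity (which yields $\limsup \int h(v_t^k)v_t^k \le \int \chi\, v_t$), and choosing $w = v_t - \lambda\phi$ and sending $\lambda\to 0^+$ then gives $\chi = h(v_t)$.

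Finally, the energy inequality stated in the lemma follows from the energy identity for $v^k$ by weak lower semicontinuity of the norms appearing on the left-hand side (equality is lost because strong convergence of $v_t^k$ is unavailable). Uniqueness is standard: for two solutions $v_1,v_2$ sharing the data, the difference $w = v_1-v_2$ solves a linear problem with zero initial conditions; testing against $w_t$ and discarding the nonnegative boundary contribution $\int_{\Gamma_1}\bigl(h(v_{1,t})-h(v_{2,t})\bigr)(v_{1,t}-v_{2,t}) \geq 0$ coming from (H1), a Gronwall argument on the remaining quadratic energy, combined once more with (\ref{Nonlinear_term_viscoelastic}) to absorb the memory term, forces $w \equiv 0$.
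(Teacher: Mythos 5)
Your strategy is genuinely different from the paper's. The paper does not pass to the limit by weak compactness at all: it approximates the data (and the frozen function $u$ itself, by smooth $u^{k}$), and then shows that the corresponding solutions $(v^{k},v^{k}_{t})$ form a \emph{Cauchy sequence} in $Y_{T}$, whose norm (\ref{norm}) controls $\max_{0\leq t\leq T}\bigl[\Vert v_{t}\Vert_{2}^{2}+l\Vert\nabla v\Vert_{2}^{2}\bigr]$ as well as the $L^{m}((0,T)\times\Gamma_{1})$ and $L^{2}(0,T;H^{1}_{\Gamma_{0}}(\Omega))$ norms of $v_{t}$. The engine of that argument is the quantitative strong monotonicity (\ref{Assumption_h_1}): testing the equation for $V=v^{k}-v^{k^{\prime}}$ against $V_{t}$ produces $\int_{0}^{t}\int_{\Gamma_{1}}\bigl(h(v^{k}_{t})-h(v^{k^{\prime}}_{t})\bigr)\bigl(v^{k}_{t}-v^{k^{\prime}}_{t}\bigr)\geq m_{0}\int_{0}^{t}\Vert V_{t}\Vert^{m}_{m,\Gamma_{1}}$, so the boundary nonlinearity is an ally rather than an obstacle; strong convergence, the continuity in time required by the statement, the passage to the limit in $h(v^{k}_{t})$, and uniqueness all come out of the same estimate. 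Your route (uniform energy bounds, weak-$*$ extraction, Aubin--Lions, Minty) is the standard alternative, but as written it has two genuine gaps.

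First, weak-$*$ limits only give $v\in L^{\infty}(0,T;H^{1}_{\Gamma_{0}}(\Omega))$ and $v_{t}\in L^{\infty}(0,T;L^{2}(\Omega))$, while the lemma asserts $v\in C([0,T];H^{1}_{\Gamma_{0}}(\Omega))\cap C^{1}([0,T];L^{2}(\Omega))$. Strong continuity in time of the top-order norms is exactly what weak compactness does \emph{not} provide; recovering it requires an energy \emph{identity} for the limit (not merely the inequality you get from lower semicontinuity), and that is not available here without further work. Second, the key inequality $\limsup_{k}\int h(v^{k}_{t})v^{k}_{t}\leq\int\chi\,v_{t}$ in your Minty argument presupposes that the limit already satisfies an energy identity in which $\chi$ can be tested against $v_{t}$; justifying this with only the regularity extracted so far is precisely the delicate point that the Cauchy-sequence argument sidesteps. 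A smaller slip: for the frozen problem (\ref{ondes_u}) the source is $|u|^{p-2}u$ with $u$ fixed, so the relevant energy is the purely quadratic one appearing in the lemma's inequality; if you keep the term $-\frac{1}{p}\Vert v^{k}\Vert_{p}^{p}$ from (\ref{Energy_visco_elastic}) in $E(\tau,v^{k},v^{k}_{t})$, the left-hand side is no longer weakly lower semicontinuous without strong $L^{p}$ convergence of $v^{k}$. Your uniqueness paragraph is sound and is in substance the paper's Cauchy estimate specialized to two solutions with the same data; promoting that computation to the center of the existence proof, as the paper does, would close both gaps at once.
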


\begin{proof} We first approximate $u\in C([0,T],H_{\Gamma
_{0}}^{1}(\Omega ))\cap C^{1}\left( [0,T],L^{2}(\Omega )\right) $ endowed
with the standard norm $\Vert u\Vert =\displaystyle\max_{t\in \lbrack
0,T]}\Vert u_{t}(t)\Vert _{2}+\Vert u(t)\Vert _{H^{1}(\Omega )}$, by a
sequence $(u^{k})_{k\in \mathbb{N}}\subset C^{\infty }([0,T]\times \overline{%
\Omega })$ by a standard convolution arguments (see \cite{B83}). Next, we
approximate the initial data $u_{1}\in L^{2}(\Omega )$ by a sequence $%
(u_{1}^{k})_{k\in \mathbb{N}}\subset C_{0}^{\infty }(\Omega )$. Finally,
since the space $H^{2}(\Omega )\cap V\cap H_{\Gamma _{0}}^{1}(\Omega )$ is
dense in $H_{\Gamma _{0}}^{1}(\Omega )$ for the $H^{1}$ norm, we approximate
$u_{0}\in H_{\Gamma _{0}}^{1}(\Omega )$ by a sequence $(u_{0}^{k})_{k\in
\mathbb{N}}\subset H^{2}(\Omega )\cap V\cap H_{\Gamma _{0}}^{1}(\Omega )$.

We consider now the set of the following problems:
\begin{equation}
\left\{
\begin{array}{ll}
v_{tt}^{k}-\Delta v^{k}-\alpha \Delta v_{t}^{k}+\displaystyle%
\int_{0}^{t}g(t-s)\Delta v^{k}(s)ds=|u^{k}|^{p-2}u^{k}, & x\in \Omega ,\ t>0%
\vspace{0.2cm}, \\
v^{k}(x,t)=0, & x\in \Gamma _{0},\ t>0\vspace{0.2cm}, \\
v_{tt}^{k}(x,t)=-\left[ \displaystyle\frac{\partial v^{k}}{\partial \nu }%
(x,t)-\displaystyle\int_{0}^{t}g(t-s)\frac{\partial v^{k}}{\partial \nu }%
(x,s)ds+\frac{\alpha \partial v_{t}^{k}}{\partial \nu }(x,t)+h\left(
v_{t}^{k}\right) \right] , & x\in \Gamma _{1},\ t>0\vspace{0.2cm}, \\
v^{k}(x,0)=u_{0}^{k},\;v_{t}^{k}(x,0)=u_{1}^{k}, & x\in \Omega .%
\end{array}%
\right.  \label{approx_k}
\end{equation}%
Since every hypothesis of Lemma \ref{existence_f} are verified, we can find
a sequence of unique solution $\left( v_{k}\right) _{k\in \mathbb{N}}$ of
the problem (\ref{approx_k}). Our goal now is to show that $%
(v^{k},v_{t}^{k})_{k\in \mathbb{N}}$ is a Cauchy sequence in the space
\begin{equation*}
\begin{array}{lll}
Y_{T} & =\Bigl\{ & (v,v_{t})/v\in C\left( \left[ 0,T\right] ,H_{\Gamma
_{0}}^{1}(\Omega )\right) \cap C^{1}\left( \left[ 0,T\right] ,L^{2}(\Omega
)\right) , \\
&  & v_{t}\in L^{2}\left( 0,T;H_{\Gamma _{0}}^{1}(\Omega )\right) \cap
L^{m}\left( \left( 0,T\right) \times \Gamma _{1}\right) \Bigl\}%
\end{array}%
\end{equation*}%
endowed with the norm
\begin{equation} \label{norm}
\Vert (v,v_{t})\Vert _{Y_{T}}^{2}=\max_{0\leq t\leq T}\Bigl[\Vert v_{t}\Vert
_{2}^{2}+l\Vert \nabla v\Vert _{2}^{2}\Bigl]+\Vert v_{t}\Vert _{L^{m}\left(
\left( 0,T\right) \times \Gamma _{1}\right) }^{2}+\int_{0}^{t}\Vert \nabla
v_{t}(s)\Vert _{2}^{2}\;ds\;.
\end{equation}%
For this purpose, we set $U=u^{k}-u^{k^{\prime }},\ V=v^{k}-v^{k^{\prime }}$%
. It is straightforward to see that $V$ satisfies:
\begin{equation*}
\left\{
\begin{array}{ll}
V_{tt}-\Delta V-\alpha \Delta V_{t}+\displaystyle\int_{0}^{t}g(t-s)\Delta
V(s)ds=|u^{k}|^{p-2}u^{k}-|u^{k^{\prime }}|^{p-2}u^{k^{\prime }} & x\in
\Omega ,\ t>0 \,, \vspace{0.2cm} \\
V(x,t)=0 & x\in \Gamma _{0},\ t>0 \,, \vspace{0.2cm} \\
V_{tt}(x,t)=-\left[\displaystyle\frac{\partial V}{\partial \nu }(x,t)-%
\displaystyle\int_{0}^{t}g(t-s)\frac{\partial V}{\partial \nu }(x,s)ds+\frac{%
\alpha \partial V_{t}}{\partial \nu }(x,t)+h(v_{t}^{k})-h(v_{t}^{k^{\prime
}})\right] & x\in \Gamma _{1},\ t>0 \,, \vspace{0.2cm} \\
V(x,0)=u_{0}^{k}-u_{0}^{k^{\prime }},\;V_{t}(x,0)=u_{1}^{k}-u_{1}^{k^{\prime
}} & x\in \Omega .%
\end{array}%
\right.
\end{equation*}%
We multiply the above differential equations by $V_{t}$, we integrate over $%
(0,t)\times \Omega $, we use integration by parts and the identity (\ref%
{Integral_relation}) to obtain:
\begin{equation}
\left.
\begin{array}{ll}
&
\begin{array}{l}
\dfrac{1}{2}\left( \Vert V_{t}\left( t\right) \Vert _{2}^{2}+\Vert
V_{t}\left( t\right) \Vert _{2,\Gamma _{1}}^{2}+\left( 1-\displaystyle%
\int_{0}^{t}g\left( r\right) dr\right) \left\Vert \nabla V\left( t\right)
\right\Vert _{2}^{2}\right) \\
+\alpha \displaystyle\int_{0}^{t}\Vert \nabla V_{t}\Vert _{2}^{2}ds\vspace{%
0.2cm}+\int_{0}^{t}\int_{\Gamma _{1}}\left( h(v_{t}^{k}(x,\tau
))-h(v_{t}^{k^{\prime }}(x,\tau ))\right) \left( v_{t}^{k}(x,\tau
)-v_{t}^{k^{\prime }}(x,\tau )\right) d\Gamma d\tau%
\end{array}
\\
& -\dfrac{1}{2}\displaystyle\int_{0}^{t}\left( g^{\prime }\diamond V\right)
\left( s\right) ds+\dfrac{1}{2}\displaystyle\int_{0}^{t}g\left( s\right)
\left\Vert \nabla V\left( s\right) \right\Vert _{2}^{2}ds\vspace{0.2cm} \\
= & \displaystyle\frac{1}{2}\left( \Vert V_{t}(0)\Vert _{2}^{2}+\Vert \nabla
V(0)\Vert _{2}^{2}+\Vert V_{t}(0)\Vert _{2,\Gamma _{1}}^{2}\right) \vspace{%
0.2cm} \\
& +\displaystyle\int_{0}^{t}\displaystyle\int\limits_{\Omega }\left(
|u^{k}|^{p-2}u^{k}-|u^{k^{\prime }}|^{p-2}u^{k^{\prime }}\right) \left(
v_{t}^{k}-v_{t}^{k^{\prime }}\right) \,dxds,\quad \forall t\in \left(
0,T\right) .%
\end{array}%
\right.  \label{Main_estimate_existence}
\end{equation}%
Consequently, the above inequality together with (\ref{hypothesis_g}), (\ref%
{hypothesis_g_2}) and (\ref{Assumption_h_1}) gives%
\begin{equation}
\left.
\begin{array}{ll}
& \dfrac{1}{2}\left( \Vert V_{t}\left( t\right) \Vert _{2}^{2}+\Vert
V_{t}\left( t\right) \Vert _{2,\Gamma _{1}}^{2}+l\left\Vert \nabla V\left(
t\right) \right\Vert _{2}^{2}\right) +\alpha \displaystyle\int_{0}^{t}\Vert
\nabla V_{t}\Vert _{2}^{2}ds\vspace{0.2cm}+m_{0}\int_{0}^{t}\Vert V_{t}\Vert
_{m,\Gamma _{1}}^{m}ds \\
\leq & \displaystyle\frac{1}{2}\left( \Vert V_{t}(0)\Vert _{2}^{2}+\Vert
\nabla V(0)\Vert _{2}^{2}+\Vert V_{t}(0)\Vert _{2,\Gamma _{1}}^{2}\right) \\
& \vspace{0.2cm}+\displaystyle\int_{0}^{t}\displaystyle\int\limits_{\Omega
}\left( |u^{k}|^{p-2}u^{k}-|u^{k^{\prime }}|^{p-2}u^{k^{\prime }}\right)
\left( v_{t}^{k}-v_{t}^{k^{\prime }}\right) \,dxds,\quad \forall t\in \left(
0,T\right) .%
\end{array}%
\right.  \label{Main_inequality_Cauchy}
\end{equation}%
Following the same method as in \cite{GS08}, we deduce that there exists $C$
depending only on $\Omega \mbox{ and }p$ such that:
\begin{equation*}
\Vert V\Vert _{Y_{T}}\leq C\left( \Vert V_{t}(0)\Vert _{2}^{2}+\Vert \nabla
V(0)\Vert _{2}^{2}+\Vert V_{t}(0)\Vert _{2,\Gamma _{1}}^{2}\right) +CT\Vert
U\Vert _{Y_{T}}.
\end{equation*}%
Since $(u_{0}^{k})_{k\in \mathbb{N}}$ is a converging sequence in $H_{\Gamma
_{0}}^{1}\left( \Omega \right) $, $(u_{1}^{k})_{k\in \mathbb{N}}$ is a
converging sequence in $L^{2}\left( \Omega \right) $ and $\left(
u^{k}\right) _{k\in \mathbb{N}}$ is a converging sequence in $C\left( \left[
0,T\right] ,H_{\Gamma _{0}}^{1}(\Omega )\right) \cap C^{1}\left( \left[ 0,T%
\right] ,L^{2}(\Omega )\right) $ (so in $Y_{T}$ also), we conclude that $%
(v^{k},v_{t}^{k})_{k\in \mathbb{N}}$ is a Cauchy sequence in $Y_{T}$. Thus $%
(v^{k},v_{t}^{k})$ converges to a limit $(v,v_{t})\in Y_{T}$. Now by the
same procedure used by Georgiev and Todorova in \cite{GT94}, we prove that
this limit is a weak solution of the problem (\ref{ondes_u}). This completes
the proof of the Lemma \ref{existence_u}.
\end{proof}
\begin{proof}[Proof of Theorem \ref{existence}]
In order to prove Theorem \ref{existence}, we use the contraction mapping
theorem.\newline
Indeed, for $T>0,$ let us define the convex closed subset of $Y_{T}$:
\begin{equation*}
X_{T}=\left\{ (v,v_{t})\in Y_{T}\mbox{ such that }v(0)=u_{0},v_{t}(0)=u_{1}%
\right\} .
\end{equation*}%
Let us denote:
\begin{equation*}
B_{R}\left( X_{T}\right) =\left\{ v\in X_{T};\Vert v\Vert _{Y_{T}}\leq
R\right\} ,
\end{equation*}%
the ball of radius $R$ in $X_{T}$. Then, Lemma \ref{existence_u} implies
that for any $u\in X_{T}$, we may define $v=\Phi \left( u\right) $ the
unique solution of (\ref{ondes_u}) corresponding to $u$. Our goal now is to
show that for a suitable $T>0$, $\Phi $ is a contractive map satisfying $%
\Phi \left( B_{R}(X_{T})\right) \subset B_{R}(X_{T})$. \newline
Let $u\in B_{R}(X_{T})$ and $v=\Phi \left( u\right) $. Then for all $t\in
\lbrack 0,T]$ we have as in (\ref{Main_estimate_existence}):
\begin{equation}
\begin{array}{l}
\Vert v_{t}\Vert _{2}^{2}+l\Vert \nabla v\Vert _{2}^{2}+\Vert v_{t}\Vert
_{2,\Gamma _{1}}^{2}+2\alpha \displaystyle\int\limits_{0}^{t}\Vert \nabla
v_{t}\Vert _{2}^{2}\,ds+c\int_{0}^{t}\Vert v_{t}\Vert _{m,\Gamma _{1}}^{m}ds
\\
\leq \Vert u_{1}\Vert _{2}^{2}+\Vert \nabla u_{0}\Vert _{2}^{2}+\Vert
u_{1}\Vert _{2,\Gamma _{1}}^{2}+2\displaystyle\int\limits_{0}^{t}%
\displaystyle\int\limits_{\Omega }|u\left( \tau \right) |^{p-2}u\left( \tau
\right) v_{t}\left( \tau \right) \,dx\,d\tau .%
\end{array}%
\quad  \label{schauder}
\end{equation}%
Using H\"{o}lder's inequality, we can control the last term in the right
hand side of the inequality (\ref{schauder}) as follows:
\begin{equation*}
\displaystyle\int\limits_{0}^{t}\displaystyle\int\limits_{\Omega }|u\left(
\tau \right) |^{p-2}u\left( \tau \right) v_{t}\left( \tau \right) dxd\tau
\leq \displaystyle\int\limits_{0}^{t}\Vert u\left( \tau \right) \Vert
_{2N/\left( N-2\right) }^{p-1}\Vert v_{t}\left( \tau \right) \Vert _{{2N}/{%
\bigl(3N-Np+2(p-1)\bigl)}}d\tau
\end{equation*}%
Since $\displaystyle p\leq \frac{2N}{N-2}$, we have:
$$\displaystyle\frac{2N}{\bigl(3N-Np+2(p-1)\bigl)}\leq \frac{2N}{N-2}\quad .$$
Thus, by Young's and Sobolev's inequalities, we get for all $\delta >0$
there exists $C(\delta )>0$, such that for all $t\in \left( 0,T\right) $
\begin{equation*}
\displaystyle\int\limits_{0}^{t}\displaystyle\int\limits_{\Omega }|u\left(
\tau \right) |^{p-2}u\left( \tau \right) v_{t}\left( \tau \right) dxd\tau
\leq C(\delta )tR^{2(p-1)}+\delta \displaystyle\int\limits_{0}^{t}\Vert
\nabla v_{t}\left( \tau \right) \Vert _{2}^{2}d\tau .
\end{equation*}%
Inserting the last estimate in the inequality (\ref{schauder}) and choosing $%
\delta $ small enough such that:
\begin{equation*}
\Vert v\Vert _{Y_{T}}^{2}\leq \frac{1}{2}R^{2}+CTR^{2(p-1)}.
\end{equation*}%
Thus, for $T$ sufficiently small, we have $\Vert v\Vert _{Y_{T}}\leq R$.
This shows that $v\in B_{R}\left( X_{T}\right) $.

To prove that $\Phi $ is a contraction, we have to follow the same steps (up
to minor changes) as in \cite{GS08}. We omit the details. Thus the proof of
Theorem \ref{existence} is finished.
\end{proof}
\begin{remark}
Let us say that the hypothesis on $m$, $\max\left( 2, \frac{\bar{q}}{\bar{q}+1-p} \right) \leq m \leq \bar{q}$, is made to pass to the limit
in the nonlinear term, by the same way we have used in \cite[Equation (2.28)]{GS08}.
\end{remark}

\section{Global existence}\label{Global_existence_section}

In this section, we show that, under some restrictions on the initial data, 
the local solution of problem (\ref{ondes}) can be continued in time
and the lifespan of the solution will be $[0,\infty )$. 
%%%%%%%%%%%%%%%%%%%%%%%%%%%%%%%%%%%%%%%%%%%
% Defintion Tmax
%%%%%%%%%%%%%%%%%%%%%%%%%%%%%%%%%%%%%%%%%%%
\begin{definition} \label{Tmax}
Let $2\leq p\leq \bar{q}$, $\max\left( 2, \frac{\bar{q}}{\bar{q}+1-p} \right) \leq m \leq \bar{q}$,
$u_{0}\in H_{\Gamma_{0}}^{1}(\Omega) $ and $u_{1}\in L^{2}(\Omega) $. We denote by $u$ the solution of (\ref{ondes}).
We define:
$$
T_{max} = \sup\Bigl\{ T > 0 \,,\, u = u(t) \ \mbox{ exists on } \ [0,T]\Bigr\} \ .
$$
Since the solution $u \in Y_T$ (the solution is ``regular enough''), from the definition of the norm given by (\ref{norm}), let us recall that if
$T_{max} < \infty$, then
$$
 \lim_{\underset {t < T_{max}} {t \rightarrow T_{max}}} \Vert \nabla u(t) \Vert_2  + \Vert u_t(t) \Vert_2 = + \infty.
$$
If $T_{max} < \infty$, we say that the solution of (\ref{ondes}) blows up and that $T_{max}$ is the blow up time.\\
If $T_{max} = \infty$, we say that the solution of (\ref{ondes})  is global.
\end{definition}

In order to study the blow up phenomenon or the global existence of the solution of (\ref{ondes}),  
for all $0\leq t <  T_{max}$, we define:
\begin{eqnarray}
I(t) &=&I(u(t))=\left( 1-\displaystyle\int_{0}^{t}g\left( s\right) ds\right)
\left\Vert \nabla u\left( t\right) \right\Vert _{2}^{2}+\left( g\diamond
u\right) \left( t\right) -\Vert u\Vert _{p}^{p},  \label{Energy_I} \\
J(t) &=&J(u(t))=\frac{1}{2}\left( 1-\displaystyle\int_{0}^{t}g\left(
s\right) ds\right) \left\Vert \nabla u\left( t\right) \right\Vert _{2}^{2}+%
\dfrac{1}{2}\left( g\diamond u\right) \left( t\right) -\frac{1}{p}\Vert
u\Vert _{p}^{p}.  \label{Energy_J}
\end{eqnarray}%
%%%%%%%
Thus the energy functional defined in (\ref{Energy_visco_elastic}) can
be rewritten as
\begin{equation}
E(u(t))=E(t)=J(t)+\frac{1}{2}\Vert u_{t}\Vert _{2}^{2}+\frac{1}{2}\Vert
u_{t}\Vert _{2,\Gamma _{1}}^{2}.  \label{Energy_E}
\end{equation}%
As in \cite{GS06,V99}, we denote by $B$ the best constant in the Poincar\'{e}-Sobolev
embedding $H_{\Gamma_{0}}^{1}(\Omega) \hookrightarrow L^{p}(\Omega)$ defined by:
\begin{equation}\label{sobolev}
B^{-1} = \inf\left\{\Vert \nabla u \Vert_2 : u \in  H_{\Gamma_{0}}^{1}(\Omega), \Vert u\Vert_p = 1 \right\}.
\end{equation}

For $u_{0}~\in~H_{\Gamma _{0}}^{1}(\Omega )\,,\,u_{1}\in L^{2}(\Omega )$, we define:
\begin{equation*}
E(0) =\dfrac{1}{2}\Vert u_{1}\Vert_{2}^{2}+\dfrac{1}{2}\Vert u_{1}\Vert_{2,\Gamma _{1}}^{2}+
\dfrac{1}{2} \left\Vert \nabla u_{0} \right\Vert _{2}^{2} -\dfrac{1}{p}\left\Vert u_{0}\right\Vert _{p}^{p}.%
\end{equation*}

The  first goal is to prove that the above energy $E\left( t\right) $ defined
in (\ref{Energy_visco_elastic}) is a non-increasing function along the
trajectories. More precisely, we have the following result:

\begin{lemma}
\label{Lemma_dissp_energy_visco} Let $2\leq p\leq \bar{q}$, $\max\left( 2, \frac{\bar{q}}{\bar{q}+1-p} \right) \leq m \leq \bar{q}$,
and $u$ be the solution of (\ref{ondes}).
Then, for all $t>0,$ we have
\begin{eqnarray}
\frac{dE\left( t\right) }{dt} &=&\frac{1}{2}\left( g^{\prime }\diamond
u\right) \left( t\right) -\frac{1}{2}g\left( t\right) \left\Vert \nabla
u\left( t\right) \right\Vert _{2}^{2}-\alpha \left\Vert \nabla
u_{t}\right\Vert _{2}^{2}-\int_{\Gamma _{1}}h\left( u_{t}\right) u_{t}d\Gamma
\notag \\
&\leq &\frac{1}{2}\left( g^{\prime }\diamond u\right) \left( t\right)
-\alpha \left\Vert \nabla u_{t}\right\Vert _{2}^{2}-\int_{\Gamma
_{1}}h\left( u_{t}\right) u_{t}d\Gamma ,\qquad \forall t\in \lbrack
0,T_{max}).  \label{dissp_enrgy_visco}
\end{eqnarray}
\end{lemma}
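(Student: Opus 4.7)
The plan is to obtain the identity \eqref{dissp_enrgy_visco} by formally testing the equation in \eqref{ondes} against $u_{t}$, to account for the dynamic boundary condition on $\Gamma_{1}$, and to use the memory-term identity \eqref{Nonlinear_term_viscoelastic} already established to turn the viscoelastic convolution into an exact derivative plus the $(g'\diamond u)$ term.

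Concretely, I would proceed as follows. First, multiply the PDE $u_{tt}-\Delta u - \alpha \Delta u_{t} + \int_{0}^{t}g(t-s)\Delta u(s)\,ds = |u|^{p-2}u$ by $u_{t}$ and integrate over $\Omega$. Apply Green's formula to $\int_{\Omega}\Delta u\,u_{t}\,dx$, $\int_{\Omega}\Delta u_{t}\,u_{t}\,dx$ and to the memory term. On $\Gamma_{0}$, $u_{t}\equiv 0$ makes all boundary contributions vanish; on $\Gamma_{1}$, use the dynamic boundary condition
\[
u_{tt}=-\Bigl[\tfrac{\partial u}{\partial \nu}-\int_{0}^{t}g(t-s)\tfrac{\partial u}{\partial \nu}(s)\,ds+\alpha \tfrac{\partial u_{t}}{\partial \nu}+h(u_{t})\Bigr]
\]
to rewrite the surviving normal-derivative surface integral as $-\tfrac{1}{2}\tfrac{d}{dt}\|u_{t}\|_{2,\Gamma_{1}}^{2} - \int_{\Gamma_{1}}h(u_{t})u_{t}\,d\Gamma$. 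The ``bulk'' contributions produce $\tfrac{1}{2}\tfrac{d}{dt}\|u_{t}\|_{2}^{2} + \tfrac{1}{2}\tfrac{d}{dt}\|\nabla u\|_{2}^{2} + \alpha \|\nabla u_{t}\|_{2}^{2}$ on the left, and $\tfrac{1}{p}\tfrac{d}{dt}\|u\|_{p}^{p}$ on the right.

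Next, I would substitute the identity \eqref{Nonlinear_term_viscoelastic} to replace the cross term $\int_{\Omega}\int_{0}^{t}g(t-s)\nabla u(s)\cdot\nabla u_{t}(t)\,ds\,dx$ by
\[
\tfrac{1}{2}(g'\diamond u)(t)+\tfrac{1}{2}\tfrac{d}{dt}\Bigl\{\|\nabla u(t)\|_{2}^{2}\int_{0}^{t}g(s)\,ds\Bigr\}-\tfrac{1}{2}g(t)\|\nabla u(t)\|_{2}^{2}-\tfrac{1}{2}\tfrac{d}{dt}(g\diamond u)(t).
\]
Collecting all the time derivatives yields precisely $\tfrac{d}{dt}E(t)$ as defined in \eqref{Energy_visco_elastic}, while the remaining terms are exactly the right-hand side of the claimed equality. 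The stated inequality then follows immediately from $g(t)\geq 0$ in \eqref{hypothesis_g}.

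The only technical obstacle is the low regularity in the class where $u$ lives, namely $u\in C([0,T],H_{\Gamma_{0}}^{1})\cap C^{1}([0,T],L^{2})$ with $u_{t}\in L^{2}(0,T;H_{\Gamma_{0}}^{1})$: $u_{tt}$ is not a priori a function, so the above manipulations are formal. To justify them, I would either carry out the computation on the Galerkin sequence $(v^{k})_{k}$ constructed in the proof of Lemma~\ref{existence_u}, where all derivatives are smooth and boundary traces are classical, and then pass to the limit using the convergences established there, or equivalently start from the already-proved energy inequality in Lemma~\ref{existence_u} (applied with $u$ itself in the source) and combine it with the identity \eqref{Nonlinear_term_viscoelastic} to reach the equality version. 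Once the identity \eqref{dissp_enrgy_visco} holds a.e.\ on $[0,T_{max})$, the monotonicity bound follows by dropping the non-positive term $-\tfrac{1}{2}g(t)\|\nabla u(t)\|_{2}^{2}$.
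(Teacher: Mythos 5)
Your proposal is correct and follows essentially the same route as the paper: multiply the equation by $u_{t}$, integrate over $\Omega$, integrate by parts using the dynamic boundary condition on $\Gamma_{1}$, substitute the identity (\ref{Nonlinear_term_viscoelastic}) for the memory cross term, and drop $-\frac{1}{2}g(t)\Vert\nabla u\Vert_{2}^{2}\leq 0$ to get the inequality. Your closing remark on justifying the formal computation via the Galerkin approximations is a point the paper passes over silently, but it does not change the argument.
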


\begin{proof}Multiplying the first equation in (\ref{ondes}) by $u_{t}$,
integrating over $\Omega $, using integration by parts we get:
\begin{equation}
\left.
\begin{array}{l}
\dfrac{d}{dt}\left\{ \dfrac{1}{2}\Vert u_{t}\Vert _{2}^{2}+\dfrac{1}{2}\Vert
u_{t}\Vert _{2,\Gamma _{1}}^{2}+\dfrac{1}{2}\Vert \nabla u\Vert _{2}^{2}-%
\dfrac{1}{p}\left\Vert u\right\Vert _{p}^{p}\right\} \vspace{0.2cm} \\
-\displaystyle\int_{\Omega }\displaystyle\int_{0}^{t}g\left( t-s\right)
\nabla u\left( s\right) \nabla u_{t}\left( t\right) dsdx\vspace{0.2cm} \\
=-\alpha \left\Vert \nabla u_{t}\right\Vert _{2}^{2}-\displaystyle%
\int_{\Gamma _{1}}h\left( u_{t}\right) u_{t}d\Gamma .%
\end{array}%
\right.  \label{d_dt_Energy_2}
\end{equation}%
A simple use of the identity (\ref{Nonlinear_term_viscoelastic}) gives (\ref%
{dissp_enrgy_visco}). This completes the proof of Lemma \ref%
{Lemma_dissp_energy_visco}.
\end{proof}

\begin{lemma}
\label{Stable_set_lemma}Let $2\leq p\leq \bar{q}$, $\max\left( 2, \frac{\bar{q}}{\bar{q}+1-p} \right) \leq m \leq \bar{q}$.
Assume that (\ref%
{hypothesis_g}) and (\ref{hypothesis_g_2}) hold. Then given $u_{0}~\in
~H_{\Gamma _{0}}^{1}(\Omega )\,,\,u_{1}\in L^{2}(\Omega )$ satisfying
\begin{equation}
\left\{
\begin{array}{l}
\beta =\dfrac{B^{p}}{l}\left( \dfrac{2p}{l\left( p-2\right) }E(0)^{\left( p-2\right) /2}\right)<1,\vspace{0.2cm} \\
I\left( u_{0}\right) >0,%
\end{array}%
\right.  \label{beta_condition}
\end{equation}%
we have:
\begin{equation*}
I\left( u\left( t\right) \right) >0,\qquad \forall t\in \lbrack 0,T_{\max }).
\end{equation*}
\end{lemma}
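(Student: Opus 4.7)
The plan is to proceed by a classical potential--well (stable set) argument, using the conservation of sign of $I(u(t))$ propagated by continuity. Set
$$T^{*}=\sup\bigl\{\,T\in[0,T_{\max})\ :\ I(u(t))>0\ \text{for all }t\in[0,T]\,\bigr\}.$$
Since $I(u_{0})>0$ and $t\mapsto I(u(t))$ is continuous (because $u\in C([0,T_{\max}),H^{1}_{\Gamma_{0}}(\Omega))$ and the integral $\int_{0}^{t}g(s)\,ds$ plus $(g\diamond u)(t)$ are continuous), we have $T^{*}>0$. The goal is to show $T^{*}=T_{\max}$; I argue by contradiction, assuming $T^{*}<T_{\max}$, which by continuity forces $I(u(T^{*}))=0$ while $I(u(t))>0$ on $[0,T^{*})$.

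The key algebraic observation is the decomposition obtained by isolating the $\|u\|_{p}^{p}$ term in $J$:
\begin{equation*}
J(t)=\frac{1}{p}I(t)+\frac{p-2}{2p}\Bigl[\Bigl(1-\int_{0}^{t}g(s)\,ds\Bigr)\|\nabla u(t)\|_{2}^{2}+(g\diamond u)(t)\Bigr].
\end{equation*}
On $[0,T^{*})$, where $I(u(t))>0$, together with $(g\diamond u)(t)\ge 0$ and $1-\int_{0}^{t}g(s)\,ds\ge l$ from (\ref{hypothesis_g}), this yields
$$J(t)\ge \frac{p-2}{2p}\,l\,\|\nabla u(t)\|_{2}^{2}.$$

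Next I combine this with the energy identity. From Lemma \ref{Lemma_dissp_energy_visco} we have $E(t)\le E(0)$, and from (\ref{Energy_E}) we have $E(t)\ge J(t)$. Hence on $[0,T^{*})$
\begin{equation*}
\|\nabla u(t)\|_{2}^{2}\le \frac{2p}{l(p-2)}E(0).
\end{equation*}
Plugging this into the Sobolev embedding (\ref{sobolev}), which gives $\|u(t)\|_{p}^{p}\le B^{p}\|\nabla u(t)\|_{2}^{p}$, I get
\begin{equation*}
\|u(t)\|_{p}^{p}\le B^{p}\|\nabla u(t)\|_{2}^{p-2}\,\|\nabla u(t)\|_{2}^{2}\le B^{p}\Bigl(\frac{2p}{l(p-2)}E(0)\Bigr)^{(p-2)/2}\|\nabla u(t)\|_{2}^{2}=l\beta\,\|\nabla u(t)\|_{2}^{2}.
\end{equation*}
Since $\beta<1$ by (\ref{beta_condition}) and $l\le 1-\int_{0}^{t}g(s)\,ds$, this gives the strict inequality
$$\|u(t)\|_{p}^{p}<\Bigl(1-\int_{0}^{t}g(s)\,ds\Bigr)\|\nabla u(t)\|_{2}^{2}+(g\diamond u)(t),$$
i.e.\ $I(u(t))>0$ for every $t\in[0,T^{*})$, and by continuity this strict inequality also passes to $t=T^{*}$, contradicting $I(u(T^{*}))=0$. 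Therefore $T^{*}=T_{\max}$ and the conclusion follows.

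I do not expect a serious obstacle: the only delicate point is to make sure that when closing the argument one uses the \emph{strict} inequality $\beta<1$ at $t=T^{*}$ (which rules out equality in the limit) and that the chain $l\le 1-\int_{0}^{t}g(s)\,ds$ is used in the right direction so that the Sobolev-derived bound $l\beta\|\nabla u\|_{2}^{2}$ is dominated by $(1-\int g)\|\nabla u\|_{2}^{2}+(g\diamond u)$. Everything else is bookkeeping of the $I$--$J$--$E$ relations.
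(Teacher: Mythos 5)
Your proposal is correct and follows essentially the same route as the paper: the decomposition $J=\frac{1}{p}I+\frac{p-2}{2p}[(1-\int_0^t g)\|\nabla u\|_2^2+(g\diamond u)]$, the bound $l\|\nabla u\|_2^2\le \frac{2p}{p-2}E(0)$, and the Sobolev estimate $\|u\|_p^p\le l\beta\|\nabla u\|_2^2$ with $\beta<1$ are exactly the paper's steps. The only difference is cosmetic: you formalize the continuation step as a supremum-plus-contradiction argument (correctly noting that the non-strict bound passes to $t=T^*$ and strictness is recovered from $\beta<1$), whereas the paper phrases it as "repeating the procedure" using the monotonicity of $E$.
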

\begin{proof} Since $I\left( u_{0}\right) >0$, then by continuity, there
exists $T^{\ast }<T_{\max }$, such that
\begin{equation*}
I\left( t\right) >0,\qquad \forall t\in \lbrack 0,T^{\ast }]
\end{equation*}%
which implies that for all $t\in \lbrack 0,T^{\ast }],$
\begin{eqnarray}
J\left( t\right) &=&\frac{1}{p}I\left( t\right) +\frac{p-2}{2p}\left\{
\left( 1-\displaystyle\int_{0}^{t}g\left( s\right) ds\right) \left\Vert
\nabla u\left( t\right) \right\Vert _{2}^{2}+\left( g\diamond u\right)
\left( t\right) \right\}  \notag \\
&\geq &\frac{p-2}{2p}\left\{ \left( 1-\displaystyle\int_{0}^{t}g\left(
s\right) ds\right) \left\Vert \nabla u\left( t\right) \right\Vert
_{2}^{2}+\left( g\diamond u\right) \left( t\right) \right\} .
\label{J_inequality}
\end{eqnarray}%
By using (\ref{hypothesis_g}), (\ref{hypothesis_g_2}), (\ref{Energy_E}) and (%
\ref{dissp_enrgy_visco}), we easily get, for all $t\in \lbrack 0,T^{\ast }]$
\begin{eqnarray}
l\left\Vert \nabla u(t)\right\Vert _{2}^{2} &\leq &\frac{2p}{p-2}J\left(
t\right) ,\   \notag \\
&\leq &\frac{2p}{p-2}E\left( t\right) \leq \frac{2p}{p-2}E\left( 0\right) .
\label{E_0_J_inequality}
\end{eqnarray}
From the definition of the constant $B$ in (\ref{sobolev}), we first get:
$$
\forall t\in \lbrack 0,T^{\ast }] \ , \ \left\Vert u(t)\right\Vert _{p}^{p} \leq B^{p}\Vert \nabla u(t)\Vert_{2}^{p} \ .
$$
Since we have:
$$
\forall t\in \lbrack 0,T^{\ast }] \ , \ B^{p}\Vert \nabla u(t)\Vert_{2}^{p}= \frac{B^{p}}{l}\Vert \nabla u(t)\Vert _{2}^{p-2}\left( l\Vert\nabla u(t)\Vert _{2}^{2}\right) ,
$$
by exploiting (\ref{E_0_J_inequality}) and (\ref{beta_condition}), we
obtain, for all $t\in \lbrack 0,T^{\ast }]$:
\begin{eqnarray*}
\left\Vert u(t)\right\Vert _{p}^{p} &\leq &\beta l\left( \Vert \nabla u(t)\Vert
_{2}^{2}\right) \\
&\leq &\beta \left( 1-\int_{0}^{t}g\left( s\right) ds\right) \Vert \nabla
u(t)\Vert _{2}^{2} \\
&<&\left( 1-\int_{0}^{t}g\left( s\right) ds\right) \Vert \nabla u(t)\Vert
_{2}^{2}.
\end{eqnarray*}%
Therefore, by using (\ref{Energy_I}), we conclude that
\begin{equation*}
I\left( t\right) >0,\qquad \forall t\in \lbrack 0,T^{\ast }].
\end{equation*}%
Using the fact that $E$ is decreasing along the trajectory, we get:
\begin{equation*}
\forall \, 0\leq t < T_{max} \,,\, \dfrac{B^{p}}{l}\left( \dfrac{2p}{l\left(p-2\right) }E\left( t\right) \right) ^{\left( p-2\right) /2}\leq \beta <1 \ .
\end{equation*}%
By repeating this procedure, $T^{\ast }$ is extended to $T_{max}.$
\end{proof}
Now, we are able to state the global existence theorem.
\begin{theorem}
\label{Global_existence}Let $2\leq p\leq \bar{q}$, $\max\left( 2, \frac{\bar{q}}{\bar{q}+1-p} \right) \leq m \leq \bar{q}$. 
Assume that (\ref{hypothesis_g}) and (\ref{hypothesis_g_2}) hold. Then given $u_{0}~\in
~H_{\Gamma _{0}}^{1}(\Omega )\,,\,u_{1}\in L^{2}(\Omega )$ satisfying (\ref%
{beta_condition}). Then the solution of (\ref{ondes}) is global and bounded.
\end{theorem}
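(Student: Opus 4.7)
The plan is to show that under the hypothesis (\ref{beta_condition}), the quantity $\Vert \nabla u(t)\Vert_2 + \Vert u_t(t)\Vert_2$ stays uniformly bounded on $[0,T_{max})$, and then to conclude via the blow-up criterion stated in Definition \ref{Tmax} that $T_{max}=\infty$.

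First, I would invoke Lemma \ref{Stable_set_lemma} to guarantee that $I(u(t))>0$ for every $t\in[0,T_{max})$. Then, starting from the definitions (\ref{Energy_I})--(\ref{Energy_E}), I would rewrite the energy as
\begin{equation*}
E(t)=\frac{1}{p}I(t)+\frac{p-2}{2p}\Bigl\{\bigl(1-\textstyle\int_{0}^{t}g(s)\,ds\bigr)\Vert\nabla u(t)\Vert_{2}^{2}+(g\diamond u)(t)\Bigr\}+\frac{1}{2}\Vert u_{t}\Vert_{2}^{2}+\frac{1}{2}\Vert u_{t}\Vert_{2,\Gamma_{1}}^{2}.
\end{equation*}
Since $I(t)>0$ and $(g\diamond u)(t)\geq 0$, and using the bound $1-\int_0^t g(s)\,ds \geq l$ coming from (\ref{hypothesis_g}), every term on the right-hand side is nonnegative. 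Combining with the energy dissipation estimate (\ref{dissp_enrgy_visco}) from Lemma \ref{Lemma_dissp_energy_visco}, which gives $E(t)\leq E(0)$ (because $g'\leq 0$, $\alpha>0$ and $h(u_t)u_t\geq 0$ by (\ref{Assumption_h})), I obtain
\begin{equation*}
\frac{p-2}{2p}\,l\,\Vert\nabla u(t)\Vert_{2}^{2}+\frac{1}{2}\Vert u_{t}(t)\Vert_{2}^{2}+\frac{1}{2}\Vert u_{t}(t)\Vert_{2,\Gamma_{1}}^{2}\leq E(t)\leq E(0),
\end{equation*}
for every $t\in[0,T_{max})$.

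From this single inequality I can read off two uniform bounds:
\begin{equation*}
\Vert\nabla u(t)\Vert_{2}^{2}\leq \frac{2p}{l(p-2)}\,E(0),\qquad \Vert u_{t}(t)\Vert_{2}^{2}\leq 2E(0),
\end{equation*}
valid on the whole existence interval. In particular, the quantity
\begin{equation*}
\Vert\nabla u(t)\Vert_{2}+\Vert u_{t}(t)\Vert_{2}
\end{equation*}
stays bounded by a constant depending only on $E(0)$, $p$ and $l$. According to the blow-up alternative stated in Definition \ref{Tmax}, if $T_{max}$ were finite this quantity would have to diverge as $t\uparrow T_{max}$. The contradiction forces $T_{max}=\infty$, and the above bounds simultaneously give boundedness of the global solution.

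There is no really hard step here; the whole argument is a continuation argument hinging on Lemma \ref{Stable_set_lemma}. The only subtle point is to be sure that the dissipation identity (\ref{dissp_enrgy_visco}) genuinely gives $E(t)\leq E(0)$ on all of $[0,T_{max})$ (one has to check that the three contributions $\frac{1}{2}(g'\diamond u)$, $-\alpha\Vert\nabla u_t\Vert_2^2$, and $-\int_{\Gamma_1}h(u_t)u_t\,d\Gamma$ are each nonpositive, which is immediate from (\ref{hypothesis_g_2}), positivity of $\alpha$, and (\ref{Assumption_h})), and that the positivity of $I(t)$ from Lemma \ref{Stable_set_lemma} persists up to $T_{max}$ rather than only on some strict subinterval, which is already embedded in the conclusion of that lemma.
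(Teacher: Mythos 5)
Your proof is correct and follows essentially the same route as the paper: both rely on Lemma \ref{Stable_set_lemma} to keep $I(u(t))>0$, decompose $J(t)$ as $\frac{1}{p}I(t)+\frac{p-2}{2p}\bigl\{(1-\int_0^t g)\Vert\nabla u\Vert_2^2+(g\diamond u)\bigr\}$, combine with the monotonicity $E(t)\leq E(0)$ from Lemma \ref{Lemma_dissp_energy_visco}, and invoke the blow-up alternative of Definition \ref{Tmax}. Your write-up is in fact slightly more explicit than the paper's (you keep the factor $l$ in the gradient bound and spell out why each dissipative term has the right sign), but the argument is the same.
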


%%%%%%%%%%%%%%%%%%%%%%%%%%%%%%%%%%%%%%
\begin{proof} To prove Theorem \ref{Global_existence}, using the definition of $T_{max}$, we have just to check that%
\begin{equation*}
\left\Vert \nabla u(t)\right\Vert_{2}^{2}+\left\Vert u_{t}(t)\right\Vert _{2}^{2}
\end{equation*}%
is uniformly bounded in time. To achieve this, we use (\ref{Energy_J}), (%
\ref{Energy_E}), (\ref{dissp_enrgy_visco}) and (\ref{E_0_J_inequality}) to
get%
\begin{eqnarray}
E\left( 0\right) &\geq &E\left( t\right) =J\left( t\right) +\frac{1}{2}\Vert
u_{t}\Vert _{2}^{2}+\frac{1}{2}\Vert u_{t}\Vert _{2,\Gamma _{1}}^{2}  \notag
\\
&\geq &\frac{p-2}{2p}\left\Vert \nabla u(t)\right\Vert _{2}^{2}+\frac{1}{2}%
\left\Vert u_{t}(t)\right\Vert _{2}^{2}.  \label{Inq_E_t_E_0}
\end{eqnarray}%
Therefore,
\begin{equation*}
\left\Vert \nabla u(t)\right\Vert_{2}^{2}+\left\Vert u_{t}(t)\right\Vert _{2}^{2}\leq C E(0)
\end{equation*}%
where $C$ is a positive constant, which depends only on $p.$
\end{proof}
%%%%%%%%%%%%%%%%%%%%%%%%%%%%%%%%%%%%%%%

\section{Exponential growth for $\protect\alpha>0$}

\label{Exponential_growth_section}

In this section we will prove that when the initial data are large enough, the energy of the solution of problem
(\ref{ondes}) defined by (\ref{Energy_visco_elastic}) grows exponentially and thus so the $L^p$ norm.

In order to state and prove the exponential growth result, we introduce the following constants: 
\begin{equation}
B_{1}=\frac{B}{l},\quad \alpha _{1}=B_{1}^{-p/(p-2)},\quad E_{1}=\left( \frac{1}{2}-\frac{1}{p}\right) \alpha_{1}^{2},\quad E_{2}=\left( \frac{l}{2}-\frac{1}{p}\right) \alpha _{1}^{2}  \label{constant}
\end{equation}
Let us first mention that $E_{2} < E_{1}$.

The following Lemma will play an essential role in the proof of the exponential growth
result, and it is inspired by the work in \cite{CCLa_2007} where the authors proved
a similar lemma for the wave equation.

First, we define the function
\begin{equation}
\gamma \left( t\right) :=\left( 1-\displaystyle\int_{0}^{t}g\left( s\right)
ds\right) \left\Vert \nabla u\left( t\right) \right\Vert _{2}^{2}+\left(
g\diamond u\right) \left( t\right) .  \label{Gama}
\end{equation}
Let us rewrite the energy functional $E$ defined by  (\ref{Energy_visco_elastic}) as:
\begin{equation}\label{Energy_visco_elastic2}
E(t) = \dfrac{1}{2}\Vert u_{t}\left(t\right) \Vert _{2}^{2}+\dfrac{1}{2}\Vert u_{t}\left( t\right) \Vert_{2,\Gamma _{1}}^{2}+
 \dfrac{1}{2}\gamma \left( t\right) - \dfrac{1}{p}\left\Vert u(t)\right\Vert_{p}^{p} \ .
\end{equation}

\begin{lemma}
\label{Vitilaro_Lemma} Let $2\leq p\leq \bar{q}$, $\max\left( 2, \frac{\bar{q}}{\bar{q}+1-p} \right) \leq m \leq \bar{q}$. 
Let $u$ be the solution of (\ref{ondes}). Assume that
\begin{equation}
E\left( 0\right) <E_{1}\quad \text{ and }\quad \left\Vert \nabla
u_{0}\right\Vert _{2}\geq \alpha _{1}.  \label{Initial_data_assumptions}
\end{equation}%
Then there exists a constant $\alpha _{2}>\alpha _{1}$ such that
\begin{equation}
\left( \gamma \left( t\right) \right) ^{1/2}\geq \alpha _{2},\qquad \forall
t\in \lbrack 0,T_{\max })  \label{result_Vitillaro_1}
\end{equation}%
and
\begin{equation}
\left\Vert u\left( t\right) \right\Vert _{p}\geq B_{1}\alpha _{2},\;\qquad
\forall t\in \lbrack 0,T_{\max }).  \label{result_Vitillaro_2}
\end{equation}
\end{lemma}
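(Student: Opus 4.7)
The plan is a Vitillaro-type barrier argument based on the shape of the potential
\[
h(\xi) \;=\; \frac{1}{2}\xi^{2} \;-\; \frac{B_{1}^{p}}{p}\xi^{p},\qquad \xi\geq 0.
\]
Differentiating, $h'(\xi)=\xi\bigl(1-B_{1}^{p}\xi^{p-2}\bigr)$, so $h$ is strictly increasing on $[0,\alpha_{1}]$, strictly decreasing on $[\alpha_{1},\infty)$, with unique maximum $h(\alpha_{1})=E_{1}$. Since $p>2$ the function tends to $-\infty$ at infinity; combined with $E(0)<E_{1}=h(\alpha_{1})$, the intermediate value theorem yields a unique $\alpha_{2}>\alpha_{1}$ with $h(\alpha_{2})=E(0)$, and (because $h(0)=0\leq E(0)$) also an $\alpha_{0}\in(0,\alpha_{1})$ with $h(\alpha_{0})=E(0)$.

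The first step is the barrier inequality
\[
h\bigl(\sqrt{\gamma(t)}\bigr) \;\leq\; E(t), \qquad \forall\, t\in [0,T_{\max}).
\]
Starting from (\ref{Energy_visco_elastic2}), drop the nonnegative kinetic terms on $\Omega$ and $\Gamma_{1}$, then chain the Poincar\'e-Sobolev inequality $\Vert u\Vert_{p}\leq B\Vert\nabla u\Vert_{2}$ with the lower bound $\gamma(t)\geq l\Vert\nabla u(t)\Vert_{2}^{2}$, which follows from (\ref{Gama}), (\ref{hypothesis_g}) and the nonnegativity of $g\diamond u$; using $l\leq 1$, this gives $\Vert u(t)\Vert_{p}^{p}\leq B_{1}^{p}\,\gamma(t)^{p/2}$ and hence the barrier bound.

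Next I initialize at $t=0$: since $\gamma(0)=\Vert\nabla u_{0}\Vert_{2}^{2}\geq \alpha_{1}^{2}$ places $\sqrt{\gamma(0)}$ on the decreasing branch of $h$, and $h(\sqrt{\gamma(0)})\leq E(0)=h(\alpha_{2})$, strict monotonicity forces $\sqrt{\gamma(0)}\geq \alpha_{2}$. To propagate to $[0,T_{\max})$ I use a connectedness argument: the sub-level set $\{\xi\geq 0:h(\xi)\leq E(0)\}$ splits into the two disjoint closed intervals $[0,\alpha_{0}]\cup[\alpha_{2},\infty)$; the map $t\mapsto\sqrt{\gamma(t)}$ is continuous since every term in (\ref{Gama}) is continuous in $t$ (using $u\in C([0,T_{\max}),H_{\Gamma_{0}}^{1}(\Omega))$); and by Lemma \ref{Lemma_dissp_energy_visco} $E(t)\leq E(0)$, so the barrier bound keeps $\sqrt{\gamma(t)}$ in that sub-level set at every $t$. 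A continuous trajectory starting in $[\alpha_{2},\infty)$ cannot jump the open gap $(\alpha_{0},\alpha_{2})$, which proves (\ref{result_Vitillaro_1}).

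The $L^{p}$ lower bound (\ref{result_Vitillaro_2}) then follows by rearranging (\ref{Energy_visco_elastic2}) and inserting $E(t)\leq E(0)$ together with $\gamma(t)\geq\alpha_{2}^{2}$:
\[
\frac{1}{p}\Vert u(t)\Vert_{p}^{p} \;\geq\; \frac{1}{2}\gamma(t)-E(t) \;\geq\; \frac{1}{2}\alpha_{2}^{2}-E(0) \;=\; \frac{B_{1}^{p}}{p}\alpha_{2}^{p},
\]
the last equality being $h(\alpha_{2})=E(0)$. I expect the only genuine subtlety to be the connectedness step: one must be careful that the strict inequality $E(0)<E_{1}$ is used to guarantee a nonempty gap $(\alpha_{0},\alpha_{2})$ separating the two components of the sub-level set, since otherwise the two branches would meet at $\alpha_{1}$ and the continuity argument would only yield $\sqrt{\gamma(t)}\geq\alpha_{1}$ rather than the required strict improvement to $\alpha_{2}$.
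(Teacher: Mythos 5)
Your proposal is correct and follows essentially the same route as the paper: the same barrier function ($h$ is the paper's $F$), the same chain $E(t)\geq \frac{1}{2}\gamma(t)-\frac{B_1^p}{p}\gamma(t)^{p/2}$ via the Sobolev constant and $\gamma(t)\geq l\Vert\nabla u\Vert_2^2$ with $l\leq 1$, the same definition of $\alpha_2$, a continuity/gap argument equivalent to the paper's contradiction step, and the identical computation $\frac{1}{2}\alpha_2^2-F(\alpha_2)=\frac{B_1^p}{p}\alpha_2^p$ for the $L^p$ bound. The only cosmetic caveat is your parenthetical $h(0)=0\leq E(0)$, which need not hold since $E(0)$ may be negative; in that case the sub-level set has a single component contained in $[\alpha_2,\infty)$ and your connectedness argument goes through even more directly.
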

%%%%%%%%%%%%%%
%%Proof
%%%%%%%%%%%%%%
\begin{proof} We first note that, by (\ref{Energy_visco_elastic2}), we have:
\begin{eqnarray}
E(t) &\geq &\dfrac{1}{2}\gamma \left( t\right) -\dfrac{1}{p}\left\Vert
u\left( t\right) \right\Vert _{p}^{p}  \notag \\
&\geq &\dfrac{1}{2}\gamma \left( t\right) -\frac{B_{1}^{p}}{p}\left(
l\left\Vert \nabla u\left( t\right) \right\Vert _{2}\right) ^{p}  \notag \\
&\geq &\dfrac{1}{2}\gamma \left( t\right) -\frac{B_{1}^{p}}{p}\left( \gamma
\left( t\right) \right) ^{p/2}  \label{F_gamma_1} \\
&=&\dfrac{1}{2}\alpha ^{2}-\frac{B_{1}^{p}}{p}\alpha ^{p}=F\left( \alpha
\right) ,  \notag
\end{eqnarray}%
where $\alpha =\left( \gamma \left( t\right) \right) ^{1/2}.$ It is easy to verify that $F$ is increasing for $0<\alpha <\alpha _{1},$ decreasing for
$\alpha >\alpha _{1},$ $F\left( \alpha \right) \rightarrow -\infty $ as $%
\alpha \rightarrow +\infty ,$ and
\begin{equation*}
F\left( \alpha _{1}\right) =E_{1},
\end{equation*}%
where $\alpha _{1}$ is given in (\ref{constant}). Therefore, since $%
E(0)<E_{1},$\ there exists $\alpha _{2}>\alpha _{1}$ such that $\,\,F\left(
\alpha _{2}\right) =$ $E\left( 0\right) .$\newline
If we set $\alpha _{0}=\left( \gamma \left( 0\right) \right) ^{1/2},$ then
by (\ref{F_gamma_1}) we have:
\begin{equation*}
F\left( \alpha _{0}\right) \leq E\left( 0\right) =F\left( \alpha _{2}\right)
,
\end{equation*}%
which implies that $\alpha _{0}\geq \alpha _{2}.$\newline
Now to establish (\ref{result_Vitillaro_1}), we suppose by contradiction that:
\begin{equation*}
\left( \gamma \left( t_{0}\right) \right) ^{1/2}<\alpha _{2},
\end{equation*}%
for some $t_{0}>0$ and by the continuity of $\gamma \left( .\right) ,$ we
may choose\ $t_{0}$ such that
\begin{equation*}
\left( \gamma \left( t_{0}\right) \right) ^{1/2}>\alpha _{1}.
\end{equation*}%
Using again  (\ref{F_gamma_1}) leads to:
\begin{equation*}
E\left( t_{0}\right) \geq F\left( \gamma \left( t_{0}\right) ^{1/2}\right)>F(\alpha _{2})=E\left( 0\right) .
\end{equation*}%

But this is impossible since  for all $t>0$, $E(t)\leq E\left( 0\right) $. Hence (\ref{result_Vitillaro_1}) is established.

To prove (\ref{result_Vitillaro_2}), we use (\ref{Energy_visco_elastic2}) to get:
\begin{equation*}
\frac{1}{2}\gamma \left( t\right) \leq E\left( 0\right) +\dfrac{1}{p}%
\left\Vert u\left( t\right) \right\Vert _{p}^{p} \ .
\end{equation*}%
Consequently, using (\ref{result_Vitillaro_1}) leads to:
\begin{eqnarray*}
\dfrac{1}{p}\left\Vert u\left( t\right) \right\Vert _{p}^{p} &\geq &\frac{1}{%
2}\gamma \left( t\right) -E\left( 0\right) \\
&\geq &\frac{1}{2}\alpha _{2}^{2}-E\left( 0\right).
\end{eqnarray*}%
But we have:
$$\frac{1}{2}\alpha _{2}^{2}-E\left( 0\right) = \frac{1}{2}\alpha _{2}^{2}-F\left( \alpha _{2}\right) =\frac{B_{1}^{p}}{p}\alpha_{2}^{p} \ .$$
Therefore (\ref{result_Vitillaro_2}) holds. This finishes the proof of Lemma \ref{Vitilaro_Lemma}.
\end{proof}
The exponential growth result reads as follows:

\begin{theorem}
\label{blow_up_viscoel} Suppose that (\ref{hypothesis_g}), (\ref%
{hypothesis_g_2}) (\ref{Assumption_h}) hold. Assume that
\begin{equation*}
2\leq m\quad \text{and}\quad \max \left( m,2/l\right) <p\leq \overline{p}.
\end{equation*}
Then, the solution of (\ref{ondes}) satisfying%
\begin{equation}
E\left( 0\right) <E_{2}\ \mbox{ and } \ \left\Vert \nabla u_{0}\right\Vert _{2}\geq
\alpha _{1},  \label{Initial_data_visco}
\end{equation}%
grows exponentially in the $L^{p}$ norm.
\end{theorem}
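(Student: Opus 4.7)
The plan is to adapt the Vitillaro--Georgiev--Todorova auxiliary-functional method to the present viscoelastic dynamic-boundary setting. First, set
$$H(t) := E_{2}-E(t),$$
so that Lemma \ref{Lemma_dissp_energy_visco} gives $H'(t)\geq \alpha\|\nabla u_{t}\|_{2}^{2}+\int_{\Gamma_{1}}h(u_{t})u_{t}\,d\Gamma\geq 0$, and the assumption $E(0)<E_{2}$ yields $H(0)>0$. Using (\ref{Energy_visco_elastic2}), the lower bound $\gamma(t)\geq \alpha_{2}^{2}$ from Lemma \ref{Vitilaro_Lemma}, and the definition of $E_{2}$, I would verify the key double inequality
$$0 < H(0) \leq H(t) \leq \tfrac{1}{p}\|u(t)\|_{p}^{p},\qquad t\in[0,T_{\max}).$$

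I then introduce the perturbed functional
$$L(t) := H(t) + \varepsilon\int_{\Omega} u\, u_{t}\,dx + \varepsilon\int_{\Gamma_{1}} u\, u_{t}\,d\Gamma + \tfrac{\varepsilon\alpha}{2}\|\nabla u(t)\|_{2}^{2},$$
where $\varepsilon>0$ is small and chosen last; the $\tfrac{\alpha}{2}\|\nabla u\|_{2}^{2}$ correction is designed precisely to cancel the $\alpha(\nabla u,\nabla u_{t})$ that arises from the strong-damping term when the equation is multiplied by $u$. Differentiating $L$, integrating by parts on $\Omega$, using the dynamic boundary condition on $\Gamma_{1}$ to replace $\partial_{\nu}u - \int_{0}^{t}g(t-s)\partial_{\nu}u(s)\,ds + \alpha\partial_{\nu}u_{t}$ by $-u_{tt}-h(u_{t})$, and splitting the memory integral as $\int_{0}^{t}g(t-s)(\nabla u(s),\nabla u(t))\,ds = \bigl(\int_{0}^{t}g(s)\,ds\bigr)\|\nabla u(t)\|_{2}^{2} + \int_{0}^{t}g(t-s)(\nabla u(s)-\nabla u(t),\nabla u(t))\,ds$ with Cauchy--Schwarz and Young on the cross term, I would obtain
$$L'(t) \geq H'(t) + \varepsilon\Bigl[\|u_{t}\|_{2}^{2}+\|u_{t}\|_{2,\Gamma_{1}}^{2} - \bigl(1-\tfrac{1}{2}\!\int_{0}^{t}\! g(s)\,ds\bigr)\|\nabla u\|_{2}^{2} - \tfrac{1}{2}(g\diamond u)(t) + \|u\|_{p}^{p} - \int_{\Gamma_{1}} u\,h(u_{t})\,d\Gamma\Bigr].$$

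Next I substitute a fraction $\theta\in(0,1)$ of the $\|u\|_{p}^{p}$ term via the identity $\|u\|_{p}^{p} = \tfrac{p}{2}\gamma(t) + \tfrac{p}{2}(\|u_{t}\|_{2}^{2}+\|u_{t}\|_{2,\Gamma_{1}}^{2}) + pH(t) - pE_{2}$, which is immediate from the definition of $H$. The hypothesis $p>2/l$ in (\ref{Initial_data_visco}) is precisely what makes the resulting coefficient of $\|\nabla u\|_{2}^{2}$ strictly positive once $\theta$ is close to $1$, and the unwanted constant $-\theta pE_{2}$ is absorbed by the remainder $(1-\theta)\|u\|_{p}^{p}\geq (1-\theta)(B_{1}\alpha_{2})^{p}$, thanks to Lemma \ref{Vitilaro_Lemma}. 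The boundary-damping integral is dealt with using (H2): since $|h(s)|\leq C_{m}|s|^{m-1}$, Young's inequality with conjugate exponents $m$ and $m/(m-1)$ yields
$$\Bigl|\int_{\Gamma_{1}} u\,h(u_{t})\,d\Gamma\Bigr| \leq \delta\|u\|_{m,\Gamma_{1}}^{m} + \frac{C(\delta)}{c_{m}}H'(t),$$
whose second piece is absorbed by $H'(t)$ in $L'$ once $\varepsilon$ is small, while the first piece is controlled through the trace embedding $H^{1}_{\Gamma_{0}}\hookrightarrow L^{m}(\Gamma_{1})$, the hypothesis $m\leq p$, and the positive lower bound $\|u\|_{p}\geq B_{1}\alpha_{2}$ from Lemma \ref{Vitilaro_Lemma}, which together let one dominate $\|u\|_{m,\Gamma_{1}}^{m}$ by a small multiple of $\|u\|_{p}^{p}$ up to terms already positive in $L'$. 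Collecting everything, I expect to reach $L'(t)\geq cL(t)$ with $L(0)>0$, whence $L(t)\geq L(0)e^{ct}$; since $L(t)$ is dominated by a constant multiple of $\|u(t)\|_{p}^{p}$ (up to bounded correctors), the $L^{p}$-norm of $u$ grows at least exponentially.

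\textbf{The main obstacle} is the boundary-damping estimate in the last step: because no polynomial structure is assumed on $h$, one must work only from (H1)--(H2), and Young's inequality inevitably produces the ``wrong-norm'' term $\|u\|_{m,\Gamma_{1}}^{m}$; its absorption into $\|u\|_{p}^{p}$ requires the joint use of $m\leq p$, the trace embedding, and the Vitillaro-type lower bound of Lemma \ref{Vitilaro_Lemma}. This is exactly the mechanism that allows one to dispense with the classical restriction (\ref{Messaoudi_condition}) on the relaxation kernel and, at the same time, to avoid imposing a polynomial shape on $h$.
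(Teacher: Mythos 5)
Your proposal is correct and follows essentially the same route as the paper: the same functional $\mathscr{H}(t)=E_{2}-E(t)$, the same perturbation $\mathscr{L}(t)=\mathscr{H}(t)+\varepsilon\int_{\Omega}u_{t}u\,dx+\varepsilon\int_{\Gamma_{1}}u_{t}u\,d\Gamma+\tfrac{\varepsilon\alpha}{2}\Vert\nabla u\Vert_{2}^{2}$, the same splitting of the memory term, the same trace--interpolation absorption of $\Vert u\Vert_{m,\Gamma_{1}}^{m}$, and the same use of the lower bound $\Vert u\Vert_{p}\geq B_{1}\alpha_{2}$ to absorb the constant $E_{2}$ (your condition at $\theta=2/(pl)$ is exactly the paper's $c_{1}>0$). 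The only differences are cosmetic: you substitute a fraction $\theta$ of $\Vert u\Vert_{p}^{p}$ via the energy identity where the paper eliminates $-l\Vert\nabla u\Vert_{2}^{2}$ instead, and you fold $\Vert u_{t}\Vert_{m,\Gamma_{1}}^{m}$ into $\mathscr{H}'(t)$ rather than keeping it with a positive coefficient.
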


\begin{remark}
\label{Remark_Gerbi_Said} It is obvious that for $g=0$, we have $E_1=E_2$,
and Theorem \ref{blow_up_viscoel} reduces to Theorem 3.1 in \cite{GS08}.
\end{remark}

\begin{remark}
\label{Remark_integral_condition} In Theorem \ref{blow_up_viscoel}, the
condition
\begin{equation*}
\int_0^\infty g(s)ds<\frac{(p/2)-1}{(p/2)-1+(1/2p)}
\end{equation*}
used in \cite%
{Mess_Kafi_2007,Mess_Kafi_2008,MS2010,Mes01,Mess03,SaZh2010,YWL2009} is
unnecessary and our result holds without it.
\end{remark}
\begin{remark} \label{remarkc1}
Let us denote $c_{1}=\left(l-\frac{2}{p}\right)-2E_{2}\left( B_{1}\alpha _{2}\right) ^{-p}.$ Since we have seen that
 $\alpha_{2} > \alpha_{1}$, using the definition of $E_{2}$, we easily get  $c_{1}>0$. This constant will
play an important role in the proof of  Theorem \ref{blow_up_viscoel}
\end{remark}
\begin{proof}[Proof of Theorem \protect\ref{blow_up_viscoel}]
\label{subsection_proof_blow_up}

We implement the so-called Georgiev-Todorova method (see \cite{GT94,Mes01}
and also \cite{MS041}). So, we suppose that the solution exists for all time and we will prove an exponential growth. For this purpose, we set:
\begin{equation}
\mathscr{H}\left( t\right) =E_{2}-E\left( t\right) .  \label{function_H}
\end{equation}%
Of course by (\ref{Initial_data_assumptions}) and (\ref{dissp_enrgy_visco})
and since $E_{2}<E_{1}$, we deduce that $\mathscr{H}$ is a non-decreasing
function. So, by using (\ref{Energy_visco_elastic2}) and, (\ref{function_H})
we get successively:
\begin{equation*}
0 < \mathscr{H}\left( 0\right) \leq \mathscr{H}\left( t\right) 
\leq E_{2}-E\left( t\right) 
\leq E_{1}-\dfrac{1}{2}\gamma(t)+\dfrac{1}{p}\left\Vert u\left( t\right) \right\Vert _{p}^{p}. 
\end{equation*}%
On one hand as $F(\alpha_{1}) = E_{1}$ and $ \forall \ t > 0 \,,\ \gamma(t) \geq \alpha_{2}^{2} > \alpha_{1}^{2}$, we obtain:
$$
E_{1}-\dfrac{1}{2}\gamma(t)< F\left( \alpha _{1}\right) -\frac{1}{2}\alpha _{1}^{2}
$$
On the other hand, since
$$F\left( \alpha _{1}\right) -\frac{1}{2}\alpha _{1}^{2}=-\frac{B_{1}^{p}}{p}\alpha _{1}^{p} \ , $$
we obtain the following inequality:
\begin{equation}
0<\mathscr{H}\left( 0\right) \leq \mathscr{H}\left( t\right) \leq \dfrac{1}{p%
}\left\Vert u\left( t\right) \right\Vert _{p}^{p},\;\qquad \forall t\geq 0.
\label{H_inequality}
\end{equation}%
For $\varepsilon $ small to be chosen later, and inspired by the ideas of
the authors in  \cite{GS08}, we then define the auxiliary function:
\begin{equation}
\mathscr{L}\left( t\right) =\mathscr{H}\left( t\right) +\varepsilon
\int_{\Omega }u_{t}udx+\varepsilon \int_{\Gamma _{1}}u_{t}ud\Gamma +\frac{%
\varepsilon \alpha }{2}\left\Vert \nabla u\right\Vert _{2}^{2}.  \label{defL}
\end{equation}%
Let us remark that $\mathscr{L}$ is a small perturbation of the energy. By
taking the time derivative of (\ref{defL}), using problem (\ref{ondes}), we obtain:
\begin{eqnarray}
\frac{d\mathscr{L}(t)}{dt} &=&\alpha \left\Vert \nabla u_{t}\right\Vert
_{2}^{2}+\int_{\Gamma _{1}}h\left( u_{t}\right) u_{t}d\Gamma +\varepsilon
\left\Vert u_{t}\right\Vert _{2}^{2}-\varepsilon \left\Vert \nabla
u\right\Vert _{2}^{2}  \notag \\
&&+\varepsilon \left\Vert u\right\Vert _{p}^{p}+\varepsilon \left\Vert
u_{t}\right\Vert _{2,\Gamma _{1}}^{2}-\varepsilon \int_{\Gamma _{1}}h\left(
u_{t}\right) u(x,t)d\sigma  \notag \\
&&+\int_{\Omega }\nabla u\left( t\right) \int_{0}^{t}g\left( t-s\right)
\nabla u\left( s\right) dsdx.  \label{derivL2}
\end{eqnarray}%
By making use of (\ref{Assumption_h}) and the following Young's inequality
\begin{equation}
XY\leq \frac{\lambda ^{\mu }X^{\mu }}{\mu }+\frac{\lambda ^{-\nu
}Y^{\nu }}{\nu },  \label{Young_inequality}
\end{equation}%
$X,\,Y\geq 0,\;\lambda >0,\;\mu ,\,\nu \in \mathbb{R^{+}}$ such that $%
1/\mu +1/\nu =1,$ then we get%
\begin{eqnarray}
\int_{\Gamma _{1}}h\left( u_{t}\right) ud\Gamma &\leq &C_{m}\int_{\Gamma
_{1}}\left\vert u_{t}\right\vert ^{m-2}u_{t}ud\Gamma  \notag \\
&\leq &C_{m}\frac{\lambda ^{m}}{m}\left\Vert u\right\Vert _{m,\Gamma
_{1}}^{m}+C_{m}\frac{m-1}{m}\lambda ^{-m/\left( m-1\right) }\left\Vert
u_{t}\right\Vert _{m,\Gamma _{1}}^{m}.  \label{Young_1}
\end{eqnarray}%
Now, the term involving $g$ on the right-hand side of (\ref{derivL2}) can be
written as%
\begin{eqnarray}
\displaystyle\int_{\Omega }\nabla u\left( t,x\right) \displaystyle
&&\hspace*{-1cm}\int_{0}^{t}g\left( t-s\right) \nabla u\left( s,x\right) dsdx\vspace{0.2cm}
=\Vert \nabla u\left( t\right) \Vert_{2}^{2}\left( \displaystyle\int_{0}^{t}g\left( s\right) ds\right) \label{Las_term_estimate} \\
&+&\displaystyle\int_{\Omega }\nabla u\left( t,x\right) \displaystyle\int_{0}^{t}g\left( t-s\right) \left( \nabla
u\left( s,x\right) -\nabla u\left( t,x\right) \right) dsdx \notag .
\end{eqnarray}

On the other hand, by using H\"{o}lder's and Young's inequalities, we infer
that for all $\mu >0,$ we get%
\begin{equation}
\left.
\begin{array}{l}
\displaystyle\int_{\Omega }\nabla u\left( t,x\right) \displaystyle%
\int_{0}^{t}g\left( t-s\right) \left( \nabla u\left( s,x\right) -\nabla
u\left( t,x\right) \right) dsdx\vspace{0.2cm} \\
\leq \displaystyle\int_{0}^{t}g\left( t-s\right) \Vert \nabla u\left(
t\right) \Vert _{2}\Vert \nabla u\left( s\right) -\nabla u\left( t\right)
\Vert _{2}\vspace{0.2cm}ds \\
\leq \mu \left( g\diamond u\right) \left( t\right) +\dfrac{1}{4\mu }\left( %
\displaystyle\int_{0}^{t}g\left( s\right) ds\right) \Vert \nabla u\left(
t\right) \Vert _{2}^{2}.%
\end{array}%
\right.  \label{mu_inequality}
\end{equation}%
Inserting the estimates (\ref{Young_1}) and (\ref{Las_term_estimate}) into (%
\ref{derivL2}), taking into account the inequality (\ref{mu_inequality}) and
making use of (\ref{Assumption_h}), we obtain by choosing $\mu =1/2$ and
multiplying by $l$
\begin{eqnarray}
l\mathscr{L}^{\prime }\left( t\right) &\geq &\alpha l\left\Vert \nabla
u_{t}\right\Vert _{2}^{2}+l\left( c_{m}-C_{m}\varepsilon \frac{m-1}{m}%
\lambda ^{-m/\left( m-1\right) }\right) \left\Vert u_{t}\right\Vert
_{m,\Gamma _{1}}^{m}+\varepsilon l\left\Vert u_{t}\right\Vert _{2}^{2}
\notag \\
&&+\varepsilon l\left\Vert u\right\Vert _{p}^{p}+\varepsilon l\left\Vert
u_{t}\right\Vert _{2,\Gamma _{1}}^{2}-C_{m}\varepsilon l\frac{\lambda ^{m}}{m%
}\left\Vert u\right\Vert _{m,\Gamma _{1}}^{m}  \label{dL_dt_2} \\
&&-\frac{\varepsilon l}{2}\left( g\diamond u\right) \left( t\right)
-\varepsilon l\Vert \nabla u\left( t\right) \Vert _{2}^{2}.  \notag
\end{eqnarray}%
We want now to estimate the term involving $\left\Vert u\right\Vert
_{m,\Gamma _{1}}^{m}$ in (\ref{dL_dt_2}). We proceed as in \cite{GS08}. Then,
we have
\begin{equation*}
\left\Vert u\right\Vert _{m,\Gamma _{1}}\leq C\left\Vert u\right\Vert
_{H^{s}(\Omega )},
\end{equation*}%
which holds for:
\begin{equation*}
m\geq 1\quad \mbox{ and }\quad 0<s<1,\quad s\geq \frac{N}{2}-\frac{N-1}{m}>0 ,
\end{equation*}%
where $C$ here and in the sequel denotes a generic positive constant which
may change from line to line.

Recalling the interpolation and Poincar\'{e}'s inequalities (see \cite{LM68})
\begin{eqnarray*}
\left\Vert u\right\Vert _{H^{s}(\Omega )} &\leq &C\left\Vert u\right\Vert
_{2}^{1-s}\left\Vert \nabla u\right\Vert _{2}^{s} ,\\
&\leq &C\left\Vert u\right\Vert _{p}^{1-s}\left\Vert \nabla u\right\Vert_{2}^{s},
\end{eqnarray*}%
we finally have the following inequality:
\begin{equation}
\left\Vert u\right\Vert _{m,\Gamma _{1}}\leq C\left\Vert u\right\Vert
_{p}^{1-s}\left\Vert \nabla u\right\Vert _{2}^{s}.  \label{u_m_estimate}
\end{equation}%
If $s<2/m$, using again Young's inequality, we get:
\begin{equation}
\left\Vert u\right\Vert _{m,\Gamma _{1}}^{m}\leq C\left[ \left( \left\Vert
u\right\Vert _{p}^{p}\right) ^{\frac{m\left( 1-s\right) \mu }{p}}+\left(
\left\Vert \nabla u\right\Vert _{2}^{2}\right) ^{\frac{ms\theta }{2}}\right]
\label{estiGamma1}
\end{equation}%
for $1/\mu +1/\theta =1.$ Here we choose $\theta =2/ms,$ to get $\mu
=2/\left( 2-ms\right) $. Therefore the previous inequality becomes:
\begin{equation}
\left\Vert u\right\Vert _{m,\Gamma _{1}}^{m}\leq C\left[ \left( \left\Vert
u\right\Vert _{p}^{p}\right) ^{\frac{m\left( 1-s\right) 2}{\left(
2-ms\right) p}}+\left\Vert \nabla u\right\Vert _{2}^{2}\right] .
\label{normGamma1}
\end{equation}%
Now, choosing $s$ such that:
\begin{equation*}
0<s\leq \frac{2\left( p-m\right) }{m\left( p-2\right) },
\end{equation*}%
we get:
\begin{equation}
\frac{2m\left( 1-s\right) }{\left( 2-ms\right) p}\leq 1.  \label{choicesm}
\end{equation}%
Once the inequality (\ref{choicesm}) is satisfied, we use the classical
algebraic inequality:
\begin{equation}
z^{\nu }\leq \left( z+1\right) \leq \left( 1+\frac{1}{\omega }\right) \left(
z+\omega \right) \;,\quad \forall z\geq 0\;,\quad 0<\nu \leq 1\;,\quad
\omega \geq 0,  \label{Algebraic_inequality}
\end{equation}%
to obtain the following estimate:
\begin{eqnarray}
\left( \left\Vert u\right\Vert _{p}^{p}\right) ^{\frac{m\left( 1-s\right) 2}{%
\left( 2-ms\right) p}} &\leq &d\left( \left\Vert u\right\Vert _{p}^{p}+%
\mathscr{H}\left( 0\right) \right)  \notag  \label{normeLp} \\
&\leq &d\left( \left\Vert u\right\Vert _{p}^{p}+\mathscr{H}\left( t\right)
\right) \;,\quad \forall t\geq 0 ,
\end{eqnarray}%
where we have set $d=1+1/\mathscr{H}(0)$. Inserting the estimate (\ref%
{normeLp}) into (\ref{estiGamma1}) we obtain the following important
inequality:
\begin{equation}
\left\Vert u\right\Vert _{m,\Gamma _{1}}^{m}\leq C\left[ \left\Vert
u\right\Vert _{p}^{p}+l\left\Vert \nabla u\right\Vert _{2}^{2}+\mathscr{H}%
\left( t\right) \right] .  \label{L_gam_m-norm}
\end{equation}%
Keeping in mind that  $ l=1-\int_{0}^{\infty }g\left( s\right) ds$, in order to control the
term $\left\Vert \nabla u\right\Vert _{2}^{2}$ in equation (\ref{dL_dt_2}),
we preferely use (as $\mathscr{H}(t)>0$), the following estimate:
\begin{equation*}
\left\Vert u\right\Vert _{m,\Gamma _{1}}^{m}\leq C\left[ \left\Vert
u\right\Vert _{p}^{p}+l\left\Vert \nabla u\right\Vert _{2}^{2}+2\mathscr{H}%
\left( t\right) \right] .
\end{equation*}%
which gives finally:
\begin{eqnarray}
\left\Vert u\right\Vert _{m,\Gamma _{1}}^{m} &\leq &C\left[ 2E_{2}+\left( 1+%
\frac{2}{p}\right) \left\Vert u\right\Vert _{p}^{p}-\left\Vert
u_{t}\right\Vert _{2}^{2}-\left\Vert u_{t}\right\Vert _{2,\Gamma
_{1}}^{2}\right.  \notag \\
&&\left. +\left( l-\Big(1-\int_{0}^{t}g\left( s\right) ds\Big)\right)
\left\Vert \nabla u\right\Vert _{2}^{2}-\left( g\diamond u\right) \left(
t\right) \right] .  \label{umgama1}
\end{eqnarray}%
Since $ 1-\int_{0}^{t}g\left( s\right) ds\geq l$, then we obtain from above
\begin{equation}
\left\Vert u\right\Vert _{m,\Gamma _{1}}^{m}\leq C\left[ 2E_{2}+\left( 1+%
\frac{2}{p}\right) \left\Vert u\right\Vert _{p}^{p}-\left\Vert
u_{t}\right\Vert _{2}^{2}-\left\Vert u_{t}\right\Vert _{2,\Gamma
_{1}}^{2}-\left( g\diamond u\right) \left( t\right) \right] .
\label{boundary_important_estimate}
\end{equation}%
Now, inserting (\ref{boundary_important_estimate}) into (\ref{dL_dt_2}),
then we infer that:
\begin{eqnarray}
l\mathscr{L}^{\prime }\left( t\right) &\geq &\alpha l\left\Vert \nabla
u_{t}\right\Vert _{2}^{2}+l\left( c_{m}-C_{m}\varepsilon \frac{m-1}{m}%
\lambda ^{-m/\left( m-1\right) }\right) \left\Vert u_{t}\right\Vert
_{m,\Gamma _{1}}^{m}  \notag \\
&&+\varepsilon \left( l+lC_{m}\frac{\lambda ^{m}}{m}C\right) \left\Vert
u_{t}\right\Vert _{2}^{2}+\varepsilon \left( l+lC_{m}\frac{\lambda ^{m}}{m}%
C\right) \left\Vert u_{t}\right\Vert _{2,\Gamma _{1}}^{2}  \notag \\
&&+\varepsilon \left\{ l-C_{m}l\frac{\lambda ^{m}}{m}C\left( 1+\frac{2}{p}%
\right) \right\} \left\Vert u\right\Vert _{p}^{p}  \label{dL_dt_4} \\
&&+\varepsilon \left( C_{m}l\frac{\lambda ^{m}}{m}C-\frac{l}{2}\right)
\left( g\diamond u\right) \left( t\right) -\varepsilon l\Vert \nabla u\left(
t\right) \Vert _{2}^{2}-2C_{m}\varepsilon l\frac{\lambda ^{m}}{m}CE_{2} \ .
\notag
\end{eqnarray}%
From (\ref{function_H}), we get
\begin{eqnarray*}
\mathscr{H}\left( t\right) &\leq &E_{2}-\dfrac{1}{2}\left( 1-\displaystyle%
\int_{0}^{t}g\left( s\right) ds\right) \left\Vert \nabla u\left( t\right)
\right\Vert _{2}^{2}\vspace{0.2cm} \\
&&-\dfrac{1}{2}\left( g\diamond u\right) \left( t\right) +\dfrac{1}{p}%
\left\Vert u\left( t\right) \right\Vert _{p}^{p} \\
&\leq &E_{2}-\frac{l}{2}\left\Vert \nabla u\left( t\right) \right\Vert
_{2}^{2}-\dfrac{1}{2}\left( g\diamond u\right) \left( t\right) +\dfrac{1}{p}%
\left\Vert u\left( t\right) \right\Vert _{p}^{p}.
\end{eqnarray*}%
This last inequality gives
\begin{equation}
-l\left\Vert \nabla u\left( t\right) \right\Vert _{2}^{2}\geq 2\left( %
\mathscr{H}\left( t\right) -E_{2}+\dfrac{1}{2}\left( g\diamond u\right)
\left( t\right) -\dfrac{1}{p}\left\Vert u\left( t\right) \right\Vert
_{p}^{p}\right) .  \label{Gradient_inequality}
\end{equation}%
Consequently, (\ref{dL_dt_4}) takes the form%
\begin{eqnarray}
l\mathscr{L}^{\prime }\left( t\right) &\geq &\alpha l\left\Vert \nabla
u_{t}\right\Vert _{2}^{2}+l\left( c_{m}-C_{m}\varepsilon \frac{m-1}{m}%
\lambda ^{-m/\left( m-1\right) }\right) \left\Vert u_{t}\right\Vert
_{m,\Gamma _{1}}^{m}  \notag \\
&&+\varepsilon \left( l+lC_{m}\frac{\lambda ^{m}}{m}C\right) \left\Vert
u_{t}\right\Vert _{2}^{2}+\varepsilon \left( l+lC_{m}\frac{\lambda ^{m}}{m}%
C\right) \left\Vert u_{t}\right\Vert _{2,\Gamma _{1}}^{2}  \notag \\
&&+\varepsilon \left\{ l-\frac{2}{p}-C_{m}l\frac{\lambda ^{m}}{m}C\left( 1+%
\frac{2}{p}\right) \right\} \left\Vert u\right\Vert _{p}^{p}-2\varepsilon
E_{2}  \label{dL_dt_5} \\
&&+\varepsilon \left( C_{m}l\frac{\lambda ^{m}}{m}C-\frac{l}{2}+1\right)
\left( g\diamond u\right) \left( t\right) +2\varepsilon \mathscr{H}\left(
t\right) -2C_{m}\varepsilon l\frac{\lambda ^{m}}{m}CE_{2}.  \notag
\end{eqnarray}%
Now to estimate the terms involving $\left\Vert u\right\Vert _{p}^{p}$ and $E_{2}$ in (\ref{dL_dt_5}), we simply write:
$$
\left( l-\frac{2}{p}\right) \left\Vert u\right\Vert_{p}^{p}-2E_{2} = 
\left( l-\frac{2}{p}\right) \left\Vert u\right\Vert _{p}^{p}-2E_{2} \frac{\Vert u\Vert_{p}^p}
{\Vert u\Vert_{p}^p} \ .$$
Then by using  (\ref{result_Vitillaro_2}), we get:
\begin{equation*}
\left( l-\frac{2}{p}\right) \left\Vert u\right\Vert _{p}^{p}-2E_{2}\geq
c_{1}\left\Vert u\right\Vert _{p}^{p},
\end{equation*}%
where $c_{1} > 0$ is defined in Remark \ref{remarkc1}.
Thus, (\ref{dL_dt_5}) becomes:
\begin{eqnarray}
l\mathscr{L}^{\prime }\left( t\right) &\geq &\alpha l\left\Vert \nabla
u_{t}\right\Vert _{2}^{2}+l\left( c_{m}-C_{m}\varepsilon \frac{m-1}{m}%
\lambda ^{-m/\left( m-1\right) }\right) \left\Vert u_{t}\right\Vert
_{m,\Gamma _{1}}^{m}  \notag \\
&&+\varepsilon \left( l+lC_{m}\frac{\lambda ^{m}}{m}C\right) \left\Vert
u_{t}\right\Vert _{2}^{2}+\varepsilon \left( l+lC_{m}\frac{\lambda ^{m}}{m}%
C\right) \left\Vert u_{t}\right\Vert _{2,\Gamma _{1}}^{2}  \notag \\
&&+\varepsilon \left\{ c_{1}-C_{m}l\frac{\lambda ^{m}}{m}C\left( 1+\frac{2}{p%
}\right) -4C_{m}\varepsilon l\frac{\lambda ^{m}}{m}CE_{2}\left( B_{1}\alpha
_{2}\right) ^{-p}\right\} \left\Vert u\right\Vert _{p}^{p}  \label{dL_dt_6}
\\
&&+\varepsilon \left( C_{m}l\frac{\lambda ^{m}}{m}C-\frac{l}{2}+1\right)
\left( g\diamond u\right) \left( t\right) +2\varepsilon \left( \mathscr{H}%
\left( t\right) +C_{m}l\frac{\lambda ^{m}}{m}CE_{2}\right) .  \notag
\end{eqnarray}%
Notice that since $l < 1$, we first have :
$$ \forall \, \lambda > 0\,,\,  C_{m}l\dfrac{\lambda^{m}}{m}C-\dfrac{l}{2}+1>0 \ .$$
At this point, we pick $\lambda $ small enough such that:
\begin{equation*}
c_{1}-C_{m}l\dfrac{\lambda ^{m}}{m}C\left( 1+\dfrac{2}{p}\right)
-4C_{m}\varepsilon l\dfrac{\lambda ^{m}}{m}CE_{2}\left( B_{1}\alpha
_{2}\right) ^{-p}>0 \ .
\end{equation*}%
Once $\lambda $ is fixed, we may choose $\varepsilon $ small enough such
that
\begin{equation*}
\left\{
\begin{array}{l}
c_{m}-C_{m}\varepsilon \dfrac{m-1}{m}\lambda ^{-m/\left( m-1\right) }>0,%
\vspace{0.2cm} \\
\mathscr{L}\left( 0\right) >0.%
\end{array}%
\right.
\end{equation*}%
Consequently, we end up with the estimate:
\begin{equation}
\mathscr{L}^{\prime }\left( t\right) \geq \eta _{1}\left( \left\Vert
u_{t}\right\Vert _{2}^{2}+\left\Vert u_{t}\right\Vert _{2,\Gamma
_{1}}^{2}+\left\Vert u\right\Vert _{p}^{p}+\mathscr{H}\left( t\right)
+E_{2}\right) ,\quad \forall t\geq 0  \ . \label{First_main_inequality}
\end{equation}%
Next, it is clear that, by Young's and Poincar\'{e}'s inequalities,
we have:
\begin{equation}
\mathscr{L}\left( t\right) \leq \gamma \left[ \mathscr{H}\left( t\right)
+\left\Vert u_{t}\right\Vert _{2}^{2}+\left\Vert u_{t}\right\Vert _{2,\Gamma
_{1}}^{2}+\left\Vert \nabla u\right\Vert _{2}^{2}\right] \,\mbox{ for some }%
\gamma >0.  \label{estiL1}
\end{equation}%
Since $\mathscr{H}(t)>0$, then for all $t\geq 0$, we have:
\begin{equation}
\frac{l}{2}\left\Vert \nabla u\right\Vert _{2}^{2}\leq \frac{1}{p}\left\Vert
u\right\Vert _{p}^{p}+E_{2},\quad  \label{Dradient_main_estimate}
\end{equation}%
Thus, the inequality (\ref{estiL1}) becomes:
\begin{equation}
\mathscr{L}\left( t\right) \leq \zeta \left[ \mathscr{H}(t)+\left\Vert
u_{t}\right\Vert _{2}^{2}+\left\Vert u_{t}\right\Vert _{2,\Gamma
_{1}}^{2}+\left\Vert u\right\Vert _{p}^{p}+E_{2}\right] \,\,%
\mbox{ for some
}\zeta >0.  \label{estiL1bis}
\end{equation}%
From the two inequalities (\ref{First_main_inequality}) and (\ref{estiL1bis}%
), we finally obtain the differential inequality:
\begin{equation}
\frac{d\mathscr{L}\left( t\right) }{dt}\geq \mu \mathscr{L}\left( t\right)
\,\,\mbox{ for some }\mu >0.  \label{diffineq}
\end{equation}%
An integration of the previous differential inequality (\ref{diffineq}) between $0$
and $t$ gives the following estimate for the function $\mathscr{L}$:
\begin{equation}
\mathscr{L}\left( t\right) \geq \mathscr{L}\left( 0\right) e^{\mu t}.
\label{estiL2}
\end{equation}%
On the other hand, from the definition of the function $\mathscr{L}$, from inequality (\ref{H_inequality}) and  for small values of the parameter $\varepsilon$,
it follows that:
\begin{equation}
\mathscr{L}\left( t\right) \leq \frac{1}{p}\left\Vert u\right\Vert _{p}^{p}.
\label{estiLp2}
\end{equation}%
From the two inequalities (\ref{estiL2}) and (\ref{estiLp2}) we conclude the
exponential growth of the solution in the $L^{p}$-norm.
\end{proof}
\section{Blow up in finite time for $\protect\alpha=0$}

\label{blow_up_section}

In this section, we prove that in the absence of the strong damping $-\Delta
u_{t}$, (i.e. $\alpha =0$),  the solution of problem (\ref{ondes}) blows
up in finite time that  is it exists  $0<T^{\ast}<\infty $ such that $\left\Vert u(t)\right\Vert_{p}\rightarrow \infty $ as $t\rightarrow T^{\ast }$.

The blow up result reads as follows:
\begin{theorem}
\label{blow_up_} Suppose that (\ref{hypothesis_g}), (\ref{hypothesis_g_2})
and (\ref{Assumption_h}) hold. Assume that
\begin{equation*}
2<m\quad \text{and}\quad \max \left( m,2/l\right) <p\leq \overline{p}.
\end{equation*}%
Then, the solution of (\ref{ondes}) satisfying%
\begin{equation}
E\left( 0\right) <E_{2},\qquad \left\Vert \nabla u_{0}\right\Vert _{2}\geq
\alpha _{1},
\end{equation}%
blows up in finite time. That is $\left\Vert u\left( t\right) \right\Vert
_{p}\rightarrow \infty $ as $t\rightarrow T^{\ast }$ for some $0<T^{\ast
}<\infty $.
\end{theorem}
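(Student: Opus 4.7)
The plan is to adapt the concavity (Georgiev--Todorova) argument used for the exponential growth in Theorem \ref{blow_up_viscoel}, but with the power-modified auxiliary function $\mathscr{H}^{1-\sigma}$ in place of $\mathscr{H}$. The absence of the strong damping term $-\alpha \Delta u_t$ is actually what enables the upgrade from exponential growth to finite-time blow-up: in the derivative of the natural Lyapunov functional, there is no longer a $\|\nabla u_t\|_2^2$ term that must be absorbed, so the bad positive contributions can be controlled directly by $\mathscr{H}'(t)$ provided a small positive exponent $\sigma$ is introduced.

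More precisely, I would first re-run the opening of the previous proof: set $\mathscr{H}(t)=E_2-E(t)$. Lemmas \ref{Lemma_dissp_energy_visco} and \ref{Vitilaro_Lemma} still apply when $\alpha=0$, so $\mathscr{H}$ is non-decreasing, and one has $0<\mathscr{H}(0)\leq \mathscr{H}(t)\leq \tfrac1p\|u(t)\|_p^p$ together with $\|u(t)\|_p\geq B_1\alpha_2$. Then I would define, for a small $\sigma\in(0,(p-2)/(2p)]$ and a small $\varepsilon>0$ both to be fixed,
\begin{equation*}
\mathscr{L}(t)=\mathscr{H}(t)^{1-\sigma}+\varepsilon\int_\Omega u_t u\,dx+\varepsilon\int_{\Gamma_1} u_t u\,d\Gamma .
\end{equation*}
Differentiating and using the equation (\ref{ondes}) with $\alpha=0$, the boundary condition, and the identity (\ref{Nonlinear_term_viscoelastic}), one obtains
\begin{equation*}
\mathscr{L}'(t)=(1-\sigma)\mathscr{H}(t)^{-\sigma}\mathscr{H}'(t)+\varepsilon\bigl(\|u_t\|_2^2+\|u_t\|_{2,\Gamma_1}^2\bigr)+\varepsilon\|u\|_p^p-\varepsilon\|\nabla u\|_2^2+\varepsilon R(t)-\varepsilon\int_{\Gamma_1} h(u_t)u\,d\Gamma ,
\end{equation*}
where $R(t)$ collects the memory contributions handled exactly as in (\ref{Las_term_estimate})--(\ref{mu_inequality}).

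Step three is the substitution step. The term $-\varepsilon\|\nabla u\|_2^2$ is replaced from below by (\ref{Gradient_inequality}) so that it produces a positive $\varepsilon \mathscr{H}(t)$ piece together with a coefficient $(l-2/p)$ in front of $\|u\|_p^p$; by Remark \ref{remarkc1} and (\ref{result_Vitillaro_2}) this yields a positive $\varepsilon c_1\|u\|_p^p$. The boundary source $\varepsilon\int_{\Gamma_1}h(u_t)u\,d\Gamma$ is estimated via (\ref{Young_1}) and the trace interpolation bound (\ref{L_gam_m-norm}) as in the preceding theorem, producing the controllable terms $\varepsilon\|u\|_p^p$, $\varepsilon\|\nabla u\|_2^2$ and $\varepsilon\mathscr{H}(t)$, plus a bad term $C_m\varepsilon\frac{m-1}{m}\lambda^{-m/(m-1)}\|u_t\|_{m,\Gamma_1}^m$. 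Because $\alpha=0$, the latter must be absorbed by $(1-\sigma)\mathscr{H}(t)^{-\sigma}\mathscr{H}'(t)\geq (1-\sigma)c_m\mathscr{H}(t)^{-\sigma}\|u_t\|_{m,\Gamma_1}^m$; this is where one uses the inequality $\mathscr{H}(t)^{-\sigma}\geq \mathscr{H}(0)^{-\sigma}$, together with the choice $\sigma$ small and $\lambda$ large, to guarantee the combined coefficient of $\|u_t\|_{m,\Gamma_1}^m$ is non-negative. After this, choosing $\varepsilon$ small enough, one arrives at
\begin{equation*}
\mathscr{L}'(t)\geq \eta\bigl[\mathscr{H}(t)+\|u_t\|_2^2+\|u_t\|_{2,\Gamma_1}^2+\|u\|_p^p\bigr] ,\qquad \forall\, t\geq 0 .
\end{equation*}

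The last step is to bound $\mathscr{L}(t)^{1/(1-\sigma)}$ from above by the right-hand side. For the cross terms, Hölder's inequality on $\Omega$ and $\Gamma_1$ gives
\begin{equation*}
\Bigl|\int_\Omega u_t u\,dx\Bigr|^{1/(1-\sigma)}\leq C\bigl(\|u_t\|_2^{2/(1-\sigma)}+\|u\|_p^{2/(1-2\sigma)}\bigr) ,
\end{equation*}
and similarly on the boundary using $H^1_{\Gamma_0}\hookrightarrow L^2(\Gamma_1)\hookrightarrow L^p$-type embeddings combined with (\ref{Dradient_main_estimate}). Since $\sigma\leq (p-2)/(2p)$, one has $2/(1-2\sigma)\leq p$, so the algebraic inequality (\ref{Algebraic_inequality}) converts all these into terms controlled by $\mathscr{H}(t)+\|u\|_p^p$. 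Combining yields
\begin{equation*}
\mathscr{L}'(t)\geq \mu\,\mathscr{L}(t)^{1/(1-\sigma)} ,
\end{equation*}
with $1/(1-\sigma)>1$, and integration forces $\mathscr{L}(t)\to+\infty$ in finite time $T^\ast\leq \frac{(1-\sigma)}{\sigma\mu}\mathscr{L}(0)^{-\sigma/(1-\sigma)}$; together with (\ref{estiLp2}) this implies $\|u(t)\|_p\to+\infty$.

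The main obstacle I expect is the joint tuning of $\sigma$, $\lambda$ and $\varepsilon$: $\sigma$ must be small enough so that $2/(1-2\sigma)\leq p$ (which is where the hypothesis $p>2/l$ and $p>m$ becomes essential), yet $\sigma>0$ is needed to produce the super-linear power $1/(1-\sigma)$ that makes the differential inequality blow up rather than merely grow exponentially; meanwhile the absorption of $\|u_t\|_{m,\Gamma_1}^m$ into $\mathscr{H}^{-\sigma}\mathscr{H}'$ imposes a compatibility between $\sigma$ and the damping exponent $m$, which is exactly why the hypothesis $2<m$ (strict) appears in the statement, unlike in the exponential growth theorem.
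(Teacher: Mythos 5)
Your overall strategy (the functional $\mathscr{H}^{1-\sigma}+\varepsilon\int_\Omega u_tu\,dx+\varepsilon\int_{\Gamma_1}u_tu\,d\Gamma$, the differential inequality $\hat{L}'\geq\xi\hat{L}^{1/(1-\sigma)}$, and the final comparison with $\|u\|_p^p$) is exactly the paper's, but the pivotal absorption step is wrong as written. You propose to dominate the Young-inequality remainder $C_m\varepsilon\frac{m-1}{m}\lambda^{-m/(m-1)}\|u_t\|_{m,\Gamma_1}^m$, with a \emph{fixed} $\lambda$, by the good term $(1-\sigma)c_m\mathscr{H}(t)^{-\sigma}\|u_t\|_{m,\Gamma_1}^m$, invoking the inequality $\mathscr{H}(t)^{-\sigma}\geq\mathscr{H}(0)^{-\sigma}$. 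That inequality is backwards: $\mathscr{H}$ is non-decreasing, so $\mathscr{H}(t)^{-\sigma}\leq\mathscr{H}(0)^{-\sigma}$, and since the whole point of the argument is that $\mathscr{H}(t)\to\infty$, the coefficient $\mathscr{H}(t)^{-\sigma}$ in front of the good term actually tends to $0$. Hence no fixed choice of $\lambda$, however large, keeps the combined coefficient of $\|u_t\|_{m,\Gamma_1}^m$ non-negative for all $t$; and enlarging $\lambda$ also inflates the complementary term $C_m\frac{\lambda^m}{m}\|u\|_{m,\Gamma_1}^m$, which would destroy the positivity of the coefficient of $\|u\|_p^p$.

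The paper's fix is to make the Young parameter time-dependent: one chooses $\lambda^{-m/(m-1)}=M\mathscr{H}^{-\sigma}(t)$ for a large constant $M$. The bad term then becomes $MC_m\epsilon\frac{m-1}{m}\mathscr{H}^{-\sigma}(t)\|u_t\|_{m,\Gamma_1}^m$, carrying the same $\mathscr{H}^{-\sigma}$ weight as the good term, so it is absorbed once $\epsilon$ is small. The price is that the complementary term acquires the factor $\lambda^m=M^{-(m-1)}\mathscr{H}^{\sigma(m-1)}(t)$, so one must additionally control $\mathscr{H}^{\sigma(m-1)}\|u\|_{m,\Gamma_1}^m$; this is done via (\ref{u_m_estimate}), (\ref{H_inequality}) and the algebraic inequality (\ref{Algebraic_inequality}), and it is precisely here that the extra restrictions $\sigma\leq\hat{\sigma}$ (see (\ref{sigma_hat})) and $\sigma\leq\frac{p-m}{p(m-1)}$ in (\ref{Segma}) enter --- your single constraint $\sigma\leq(p-2)/(2p)$ is not sufficient. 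Your attribution of the hypothesis $m>2$ is also slightly off: it is forced by the requirement $\sigma\leq\frac{m-2}{2m}$, which arises when estimating $\bigl(\int_{\Gamma_1}u_tu\,d\Gamma\bigr)^{1/(1-\sigma)}$ by H\"older's and Young's inequalities on the boundary, not by the absorption of $\|u_t\|_{m,\Gamma_1}^m$. The remaining parts of your outline --- the use of (\ref{Gradient_inequality}), Remark \ref{remarkc1}, (\ref{result_Vitillaro_2}) and the final ODE comparison --- do match the paper.
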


\begin{remark}
The requirement $m>2$ in Theorem \ref{blow_up_} is technical but it seems
necessary in our proof. The case $m=2$ cannot be handled with the method we use here. But the 
same result can be 
shown for $m=2$ by using the concavity method. See
\cite{GS082} for more details.
\end{remark}

\begin{proof}[Proof of Theorem \ref{blow_up_}] To prove Theorem \ref{blow_up_}, we
suppose that the solution exists for all time and we reach to a
contradiction. Following the idea introduced in \cite{GT94} and developed in
\cite{MS041} and \cite{V99}, we will define a function $\hat{L}$ which is a
perturbation of the total energy of the system and which will satisfy the
differential inequality
\begin{equation}
\frac{d\hat{L}\left( t\right) }{dt}\geq \xi \hat{L}^{1+\nu }\left( t\right) \ ,
\label{Georgiev_inequality}
\end{equation}%
where $\nu >0.$ Inequality (\ref{Georgiev_inequality}) leads to a blow up of the solution in finite time 
$T^{\ast }\geq \hat{L}\left( 0\right) ^{-\nu }\xi ^{-1}\nu ^{-1}$, provided
that $\hat{L}\left( 0\right) >0.$ 

To do so, we define the functional $\hat{L}$ as follows:
\begin{equation}
\hat{L}\left( t\right) =\mathscr{H}^{1-\sigma }(t)+\epsilon \int_{\Omega
}u_{t}udx+\epsilon \int_{\Gamma _{1}}u_{t}ud\Gamma ,  \label{L_hat}
\end{equation}%
where the functional $\mathscr{H}$ is defined in (\ref{function_H}), $\sigma
$ is satisfying%
\begin{equation}
0<\sigma \leq \min \left( \frac{p-m}{p\left( m-1\right) },\frac{p-2}{2p},%
\frac{m-2}{2m},\hat{\sigma}\right) ,  \label{Segma}
\end{equation}%
where $\hat{\sigma}$ is defined later in (\ref{sigma_hat}) and $\epsilon $ is a
small positive constant to be chosen later. Taking the time derivative of $%
\hat{L}(t)$ and following the same steps as in the proof of Theorem \ref%
{blow_up_viscoel}, we get (instead of inequality (\ref{dL_dt_2})), for all $\lambda > 0$, 
\begin{eqnarray}
l\hat{L}^{\prime }\left( t\right) &\geq &lc_{m}\left( 1-\sigma \right) %
\mathscr{H}^{-\sigma }\left( t\right) \left\Vert u_{t}\right\Vert _{m,\Gamma
_{1}}^{m}-C_{m}\epsilon \frac{m-1}{m}\lambda ^{-m/\left( m-1\right)
}\left\Vert u_{t}\right\Vert _{m,\Gamma _{1}}^{m}+\epsilon l\left\Vert
u_{t}\right\Vert _{2}^{2}  \notag \\
&&+\epsilon l\left\Vert u\right\Vert _{p}^{p}+\epsilon l\left\Vert
u_{t}\right\Vert _{2,\Gamma _{1}}^{2}-C_{m}\epsilon l\frac{\lambda ^{m}}{m}%
\left\Vert u\right\Vert _{m,\Gamma _{1}}^{m}  \label{dL_hat_dt_1} \\
&&-\frac{\epsilon l}{2}\left( g\diamond u\right) \left( t\right) -\epsilon
l\Vert \nabla u\left( t\right) \Vert _{2}^{2}.  \notag
\end{eqnarray}%
Next, for large positive $M$, we select $\lambda ^{-m/\left( m-1\right) }=M%
\mathscr{H}^{-\sigma }\left( t\right) $. Then the estimate (\ref{dL_hat_dt_1}%
) takes the form:
\begin{eqnarray}
l\hat{L}^{\prime }\left( t\right) &\geq &\left( lc_{m}\left( 1-\sigma
\right) -MC_{m}\epsilon \frac{m-1}{m}\right) \mathscr{H}^{-\sigma }\left(
t\right) \left\Vert u_{t}\right\Vert _{m,\Gamma _{1}}^{m}+\epsilon
l\left\Vert u_{t}\right\Vert _{2}^{2}  \notag \\
&&+\epsilon l\left\Vert u\right\Vert _{p}^{p}+\epsilon l\left\Vert
u_{t}\right\Vert _{2,\Gamma _{1}}^{2}-C_{m}\epsilon l\frac{M^{-\left(
m-1\right) }}{m}H^{\sigma \left( m-1\right) }\left\Vert u\right\Vert
_{m,\Gamma _{1}}^{m}  \label{dL_hat_dt_2} \\
&&-\frac{\epsilon l}{2}\left( g\diamond u\right) \left( t\right) -\epsilon
l\Vert \nabla u\left( t\right) \Vert _{2}^{2}.  \notag
\end{eqnarray}%
Exploiting (\ref{H_inequality}) and (\ref{u_m_estimate}), we get:
\begin{equation*}
\mathscr{H}^{\sigma \left( m-1\right) }\left\Vert u\right\Vert _{m,\Gamma
_{1}}^{m}\leq C\left\Vert u\right\Vert _{p}^{\left( 1-s\right) m+\sigma
p\left( m-1\right) }\left\Vert \nabla u\right\Vert _{2}^{sm}  \ .
\end{equation*}%
Thus, as in section \ref{Exponential_growth_section},
we have
\begin{equation*}
\left\Vert u\right\Vert _{p}^{\left( 1-s\right) m+\sigma p\left( m-1\right)
}\left\Vert \nabla u\right\Vert _{2}^{sm}\leq C\left[ \left( \left\Vert
u\right\Vert _{p}^{p}\right) ^{\left( \frac{m\left( 1-s\right) }{p}+\sigma
\left( m-1\right) \right) \mu }+\left( \left\Vert \nabla u\right\Vert
_{2}^{2}\right) ^{\frac{ms\theta }{2}}\right] .
\end{equation*}%
Choosing $\mu ,\,\theta ,$ and $s$ exactly as in section \ref%
{Exponential_growth_section} (with strict inequalities), we  choose $\sigma$ that verifies:
\begin{equation}
\sigma \leq \frac{2-ms}{2\left( m-1\right) }\left( 1-\frac{2m\left(
1-s\right) }{\left( 2-ms\right) p}\right) =\hat{\sigma}.  \label{sigma_hat}
\end{equation}
The hypotheses on $m$ and $p$ ensure  to have $0 < \sigma < 1$.

Consequently, we get from above:
\begin{equation}
\mathscr{H}^{\sigma \left( m-1\right) }\left\Vert u\right\Vert _{m,\Gamma
_{1}}^{m}\leq C\left[ \left( \left\Vert u\right\Vert _{p}^{p}\right)
^{\left( \frac{m\left( 1-s\right) }{p}+\sigma \left( m-1\right) \right) \mu
}+\left\Vert \nabla u\right\Vert _{2}^{2}\right] .
\label{H_sigma_inequality}
\end{equation}%
Since,
\begin{equation*}
\left( \frac{m\left( 1-s\right) }{p}+\sigma \left( m-1\right) \right) \frac{2%
}{2-ms}\leq 1,
\end{equation*}%
applying the algebraic inequality (\ref{Algebraic_inequality}), we get:
\begin{eqnarray}
\left( \left\Vert u\right\Vert _{p}^{p}\right) ^{\left( \frac{m\left(
1-s\right) }{p}+\sigma \left( m-1\right) \right) \frac{2}{2-ms}} &\leq
&d\left( \left\Vert u\right\Vert _{p}^{p}+\mathscr{H}\left( 0\right) \right)
\notag \\
&\leq &d\left( \left\Vert u\right\Vert _{p}^{p}+\mathscr{H}\left( t\right)
\right) \;,\quad \forall t\geq 0  \ .\label{u_p_inequality_2}
\end{eqnarray}%
Thus, (\ref{u_p_inequality_2}) together with (\ref{H_sigma_inequality})
leads to (see (\ref{boundary_important_estimate})):
\begin{eqnarray}
\mathscr{H}^{\sigma \left( m-1\right) }\left\Vert u\right\Vert _{m,\Gamma
_{1}}^{m} &\leq &Cd\left[ \left\Vert u\right\Vert _{p}^{p}+l\left\Vert
\nabla u\right\Vert _{2}^{2}+\mathscr{H}\left( t\right) \right]  \notag \\
&\leq &Cd\left[ 2E_{2}+\left( 1+\frac{2}{p}\right) \left\Vert u\right\Vert
_{p}^{p}-\left\Vert u_{t}\right\Vert _{2}^{2}-\left\Vert u_{t}\right\Vert
_{2,\Gamma _{1}}^{2}-\left( g\diamond u\right) \left( t\right) \right] .
\label{H_sigma_m_inequality}
\end{eqnarray}%
Inserting (\ref{H_sigma_m_inequality}) into (\ref{dL_hat_dt_2}) and using (%
\ref{Gradient_inequality}), we obtain:
\begin{eqnarray}
l\hat{L}^{\prime}(t)&\geq &\left( lc_{m}\left( 1-\sigma
\right) -MC_{m}\epsilon \frac{m-1}{m}\right) \mathscr{H}^{-\sigma }\left(
t\right) \left\Vert u_{t}\right\Vert _{m,\Gamma _{1}}^{m}  \notag \\
&+~\epsilon& l\left( 1+C_{m}\epsilon \frac{M^{-\left( m-1\right) }}{m}%
Cd\right) \left\{ \left\Vert u_{t}\right\Vert _{2}^{2}+\left\Vert
u_{t}\right\Vert _{2,\Gamma _{1}}^{2}\right\} +2\epsilon \mathscr{H}\left(
t\right) -2\epsilon E_{2}  \label{dL_hat_dt_3} \\
&+~\epsilon&\left\{ l-\frac{2}{p}-C_{m}l\frac{M^{-\left( m-1\right) }}{m}%
Cd\left( 1+\frac{2}{p}\right) \right\} \left\Vert u\right\Vert
_{p}^{p}+C_{m}\epsilon l\frac{M^{-\left( m-1\right) }}{m}Cd\left( g\diamond
u\right) \left( t\right)   \notag\\
&-2~\epsilon& C_{m} l\frac{M^{-\left( m-1\right) }}{m}CdE_{2}+\epsilon \left(
1-\frac{l}{2}\right) \left( g\diamond u\right) \left( t\right) .  \notag
\end{eqnarray}%
Writing again 
$E_{2} = E_{2} {\Vert u\Vert_{p}^p}/{\Vert u\Vert_{p}^p}$
and using again  (\ref{result_Vitillaro_2}), we deduce that:
\begin{eqnarray*}
l\hat{L}^{\prime }\left( t\right)& \geq&  \left( lc_{m}\left( 1-\sigma
\right) -MC_{m}\epsilon \frac{m-1}{m}\right) \mathscr{H}^{-\sigma }\left(
t\right) \left\Vert u_{t}\right\Vert _{m,\Gamma _{1}}^{m} \\
&+~\epsilon& l\left( 1+C_{m}\epsilon \frac{M^{-\left( m-1\right) }}{m}%
Cd\right) \left\{ \left\Vert u_{t}\right\Vert _{2}^{2}+\left\Vert
u_{t}\right\Vert _{2,\Gamma _{1}}^{2}\right\} +2\epsilon \mathscr{H}\left(
t\right) +2C_{m}\epsilon l\frac{M^{-\left( m-1\right) }}{m}CdE_{2} \\
&+~\epsilon& \left\{ l-\frac{2}{p}-2E_{2}\left( B_{1}\alpha _{2}\right)
^{-p}-C_{m}l\frac{M^{-\left( m-1\right) }}{m}Cd\left( 1+\frac{2}{p}\right)
-4C_{m}l\frac{M^{-\left( m-1\right) }}{m}CdE_{2}\left( B_{1}\alpha
_{2}\right) ^{-p}\right\} \left\Vert u\right\Vert _{p}^{p} \\
&+~\epsilon&C_{m} l\frac{M^{-\left( m-1\right) }}{m}Cd\left( g\diamond
u\right) \left( t\right) .
\end{eqnarray*}
Thus, using the definition of $c_{1}$ in Remark \ref{remarkc1}, we get:
\begin{eqnarray*}
l\hat{L}^{\prime }\left( t\right) &\geq &\left( lc_{m}\left( 1-\sigma
\right) -MC_{m}\epsilon \frac{m-1}{m}\right) \mathscr{H}^{-\sigma }\left(
t\right) \left\Vert u_{t}\right\Vert _{m,\Gamma _{1}}^{m} \\
&+~\epsilon& l\left( 1+C_{m}\epsilon \frac{M^{-\left( m-1\right) }}{m}%
Cd\right) \left\{ \left\Vert u_{t}\right\Vert _{2}^{2}+\left\Vert
u_{t}\right\Vert _{2,\Gamma _{1}}^{2}\right\} +2\epsilon \mathscr{H}\left(
t\right) +2C_{m}\epsilon l\frac{M^{-\left( m-1\right) }}{m}CdE_{2} \\
&+~\epsilon& \left\{c_{1}-C_{m}l\frac{M^{-\left( m-1\right) }}{m}Cd\left( 1+\frac{2}{p}\right)
-4C_{m}l\frac{M^{-\left( m-1\right) }}{m}CdE_{2}\left( B_{1}\alpha
_{2}\right) ^{-p}\right\} \left\Vert u\right\Vert _{p}^{p} \\
&+~\epsilon&C_{m}l\frac{M^{-\left( m-1\right) }}{m}Cd\left( g\diamond
u\right) \left( t\right) .
\end{eqnarray*}%
Since $c_{1} > 0$, we choose $M$ large enough such that:
\begin{equation*}
c_{1}-C_{m}l\frac{M^{-\left( m-1\right) }}{m}Cd\left( 1+\frac{2}{p}\right)
-4C_{m}l\frac{M^{-\left( m-1\right) }}{m}CdE_{2}\left( B_{1}\alpha
_{2}\right) ^{-p}>0.
\end{equation*}%
Once $M$ is fixed, we pick $\epsilon $ small enough such that
\begin{equation*}
lc_{m}\left( 1-\sigma \right) -MC_{m}\epsilon \frac{m-1}{m}>0
\end{equation*}%
and $\hat{L}\left( 0\right) >0$. This leads to
\begin{equation}
\hat{L}^{\prime }\left( t\right) \geq \hat{\eta}\left( \left\Vert
u_{t}\right\Vert _{2}^{2}+\left\Vert u_{t}\right\Vert _{2,\Gamma _{1}}^{2}+%
\mathscr{H}\left( t\right) +\left\Vert u\right\Vert _{p}^{p}+E_{2}\right)
\label{L_hat_prime}
\end{equation}%
for some $\hat{\eta}>0.$

On the other hand, it is clear from the definition (\ref{L_hat}), we have:
\begin{equation}
\hat{L}^{\frac{1}{1-\sigma }}\left( t\right) \leq C\left( \epsilon ,\sigma
\right) \left[ \mathscr{H}\left( t\right) +\left( \int_{\Omega
}u_{t}\,udx\right) ^{\frac{1}{1-\sigma }}+\left( \int_{\Gamma
_{1}}u_{t}ud\Gamma \right) ^{\frac{1}{1-\sigma }}\right] \ .
\label{L_second_estimate}
\end{equation}%
By the Cauchy-Schwarz inequality and H\"{o}lder's inequality, we have:
\begin{eqnarray*}
\int_{\Omega }u_{t}udx &\leq &\left( \int_{\Omega }u_{t}^{2}dx\right) ^{%
\frac{1}{2}}\left( \int_{\Omega }u^{2}dx\right) ^{\frac{1}{2}} \\
&\leq &C\left( \int_{\Omega }u_{t}^{2}dx\right) ^{\frac{1}{2}}\left(
\int_{\Omega }\left\vert u\right\vert ^{p}dx\right) ^{\frac{1}{p}},
\end{eqnarray*}%
where $C$ is the positive constant which comes from the embedding $%
L^{p}\left( \Omega \right) \hookrightarrow L^{2}\left( \Omega \right) $.
This inequality implies that there exists a positive constant $C_{1}>0$ such
that:
\begin{equation*}
\left( \int_{\Omega }u_{t}udx\right) ^{\frac{1}{1-\sigma }}\leq C_{1}\left[
\left( \int_{\Omega }\left\vert u\right\vert ^{p}dx\right) ^{\frac{1}{%
(1-\sigma )p}}\left( \int_{\Omega }u_{t}^{2}dx\right) ^{\frac{1}{2(1-\sigma )%
}}\right] .
\end{equation*}%
Applying Young's inequality to the right hand-side of the preceding
inequality, there exists a positive constant also denoted $C>0$ such that:
\begin{equation}
\left( \int_{\Omega }u_{t}udx\right) ^{\frac{1}{1-\sigma }}\leq C\left[
\left( \int_{\Omega }\left\vert u\right\vert ^{p}dx\right) ^{\frac{\tau }{%
(1-\sigma )p}}+\left( \int_{\Omega }u_{t}^{2}dx\right) ^{\frac{\theta }{%
2(1-\sigma )}}\right] ,  \label{Young_main}
\end{equation}%
for $1/\tau +1/\theta =1$. We take $\theta =2(1-\sigma )$, hence $\tau
=2\left( 1-\sigma \right) /\left( 1-2\sigma \right) $, to get:
\begin{equation*}
\left( \int_{\Omega }u_{t}udx\right) ^{\frac{1}{1-\sigma }}\leq C\left[
\left( \int_{\Omega }\left\vert u\right\vert ^{p}dx\right) ^{\frac{2}{%
(1-2\sigma )p}}+\int_{\Omega }u_{t}^{2}dx\right] \ .
\end{equation*}%
Using the algebraic inequality (\ref%
{Algebraic_inequality}) with $z=\left\Vert u\right\Vert _{p}^{p}$, $%
\displaystyle d=1+1/\mathscr{H}(0)$, $\omega =\mathscr{H}(0)$ and $\nu =%
\displaystyle\frac{2}{p\left( 1-2\sigma \right) }$ (the condition (\ref%
{Segma}) on $\sigma $ ensuring that $0<\nu \leq 1$) we get:
\begin{equation*} 
z^{\nu }\leq d\left( z+\mathscr{H}(0)\right) \leq d\left( z+\mathscr{H}%
(t)\right) .
\end{equation*}%
Therefore, there exists a positive constant denoted $C_{2}$ such that \ for
all $t\geq 0$,%
\begin{equation}
\left( \int_{\Omega }u_{t}udx\right) ^{\frac{1}{1-\sigma }}\vspace{0.2cm}%
\leq C_{2}\left[ \mathscr{H}\left( t\right) +\left\Vert u\left( t\right)
\right\Vert _{p}^{p}+\left\Vert u_{t}\left( t\right) \right\Vert _{2}^{2}%
\right] .  \label{estimate_first_term}
\end{equation}%
Following the same method as above, we can show that there exists $C_{3}>0$
such that:
\begin{equation*}
\left( \int_{\Gamma _{1}}u_{t}ud\Gamma \right) ^{\frac{1}{1-\sigma }}\leq
C_{3}\left[ \mathscr{H}\left( t\right) +\left\Vert u\left( t\right)
\right\Vert _{m,\Gamma _{1}}^{m}+\left\Vert u_{t}\left( t\right) \right\Vert
_{2,\Gamma _{1}}^{2}\right] .
\end{equation*}%
Applying the inequality (\ref{L_gam_m-norm}), we get:
\begin{equation*}
\left( \int_{\Gamma _{1}}u_{t}ud\Gamma \right) ^{\frac{1}{1-\sigma }}\leq
C_{4}\left[ \mathscr{H}\left( t\right) +\left\Vert u\left( t\right)
\right\Vert _{p}^{p}+l\left\Vert \nabla u\left( t\right) \right\Vert
_{2}^{2}+\left\Vert u_{t}\left( t\right) \right\Vert _{2,\Gamma _{1}}^{2}%
\right] .
\end{equation*}%
Furthermore, inequality (\ref{Dradient_main_estimate}) leads to:
\begin{equation}
\left( \int_{\Gamma _{1}}u_{t}ud\Gamma \right) ^{\frac{1}{1-\sigma }}\leq
C_{5}\left[ \mathscr{H}\left( t\right) +\left\Vert u\left( t\right)
\right\Vert _{p}^{p}+\left\Vert u_{t}\left( t\right) \right\Vert _{2,\Gamma
_{1}}^{2}+E_{2}\right] .  \label{estimate_second_term}
\end{equation}%
Collecting (\ref{L_second_estimate}), (\ref{estimate_first_term}) and (\ref%
{estimate_second_term}), we obtain:
\begin{equation}
\hat{L}^{\frac{1}{1-\sigma }}\left( t\right) \leq \hat{\eta}_{1}\left\{
\left\Vert u_{t}\left( t\right) \right\Vert _{2}^{2}+\left\Vert
u_{t}\right\Vert _{2,\Gamma _{1}}^{2}+\mathscr{H}(t)+\left\Vert u\left(
t\right) \right\Vert _{p}^{p}+E_{2}\right\} ,\qquad \forall t\geq 0,
\label{L_hat_last}
\end{equation}%
for some $\hat{\eta}_{1}>0.$

Combining (\ref{L_hat_prime}) and (\ref{L_hat_last}), then, there exists a
positive constant $\xi >0$, as small as $\epsilon $, such that for all $%
t\geq 0$,
\begin{equation}
\hat{L}^{\prime }(t)\geq \xi \hat{L}^{\frac{1}{1-\sigma }}(t).
\label{L_1-sigma_2}
\end{equation}%
Thus, inequality (\ref{Georgiev_inequality}) holds. Therefore, $\hat{L}(t)$
blows up in a finite time $T^{\ast }$.

On the other hand, from the definition of the function $\hat{L}(t)$ and using  inequality (\ref{H_inequality}), for small values of the parameter $\varepsilon$,
it follows that:
\begin{equation}
\hat{L}(t)\leq \kappa \left(\left\Vert u\left( t\right) \right\Vert_{p}^{p}\right)^{1-\sigma} \ ,
\label{inq_L_norm}
\end{equation}%
where $\kappa $ is a positive constant. Consequently, from the inequality (%
\ref{inq_L_norm}) we conclude that the norm $\left\Vert u\left( t\right)
\right\Vert _{p}$ of the solution $u$, blows up in the finite time $T^{\ast
} $, which implies the desired result. This completes the proof of Theorem %
\ref{blow_up_}. 
\end{proof}
%\bibliographystyle{plain}
%\bibliography{bib3}

\end{document}